\definecolor{hot}{RGB}{65,105,225}
\newcommand{\CP}{\mathbb{CP}^{n}}
\theoremstyle{plain}
\newtheorem{theorem}{Theorem}[section]
\newtheorem{prop}[theorem]{Proposition}
\newtheorem{lm}[theorem]{Lemma}
\newtheorem{cor}[theorem]{Corollary}
\newtheorem{lemma}[theorem]{Lemma}
\newtheorem{thrm}[theorem]{Theorem}
\theoremstyle{definition}
\newtheorem{defn}[theorem]{Definition}
\newtheorem{que}[theorem]{Question}
\newtheorem{rmk}[theorem]{Remark}
\newtheorem{ex}[theorem]{Example}
\newtheorem*{ex*}{Example}
\def\be{\begin{equation}}
\def\ee{\end{equation}}
\def\bt{\begin{thrm}}
\def\et{\end{thrm}}
\def\bc{\begin{cor}}
\def\ec{\end{cor}}
\def\br{\begin{rmk}}
\def\er{\end{rmk}}
\def\bp{\begin{prop}}
\def\ep{\end{prop}}
\def\bl{\begin{lm}}
\def\el{\end{lm}}
\def\bex{\begin{ex}}
\def\eex{\end{ex}}
\def\bd{\begin{defn}}
\def\ed{\end{defn}}
\newcommand\fp{{\mathfrak p}}
\newcommand\sO{{\mathcal O}}
\newcommand\sP{{\mathcal P}}
\newcommand\sM{{\mathcal M}}
\newcommand\sV{{\mathcal V}}
\newcommand\sL{\mathcal{L}}
\newcommand\sT{\mathcal{T}}
\newcommand{\Fp}{\mathbb{F}_p}
\newcommand\kk{{\mathbb{K}}}
\newcommand\zz{{\mathbb{Z}}}
\newcommand\cc{{\mathbb{C}}}
\DeclareMathOperator{\codim}{codim}              
\DeclareMathOperator{\id}{id}                    
\DeclareMathOperator{\spec}{Spec}
\DeclareMathOperator{\rank}{Rank}
\DeclareMathOperator{\IC}{IC}
\DeclareMathOperator{\Ann}{Ann}
\DeclareMathOperator{\Ass}{Ass}
\DeclareMathOperator{\Perv}{Perv}
\DeclareMathOperator{\Alb}{Alb}
\DeclareMathOperator{\alb}{alb}
\DeclareMathOperator{\rad}{rad}
\DeclareMathOperator{\depth}{depth}
\DeclareMathOperator{\Hom}{Hom}
\DeclareMathOperator{\Char}{Char}
\def\ra{\rightarrow}
\def\bC{\mathbb{C}}
\def\cM{\mathcal{M}}
\def\cH{\mathcal{H}}
\def\lra{\longrightarrow}
\def\bQ{\mathbb{Q}}
\def\bZ{\mathbb{Z}}
\def\bN{\mathbb{N}}
\newcommand{\ubul}{{\,\begin{picture}(-1,1)(-1,-3)\circle*{2}\end{picture}\ }}
\title[Propagation and abelian duality spaces]{Mellin transformation, propagation, \\ and abelian duality spaces}
\author{Yongqiang Liu}
\address{Department of Mathematics, KU Leuven, 
Celestijnenlaan 200B, B-3001 Leuven, Belgium} 
\email{liuyq1117@gmail.com}
\author{Laurentiu Maxim}
\address{Department of Mathematics,
          University of Wisconsin-Madison,
          480 Lincoln Drive, Madison WI 53706-1388, USA.}
\email {maxim@math.wisc.edu}
\author{Botong Wang}
\address{Department of Mathematics,
          University of Wisconsin-Madison,
          480 Lincoln Drive, Madison WI 53706-1388, USA.}
\email {wang@math.wisc.edu}
\date{\today}
\keywords{complex affine torus, perverse sheaf, Mellin transformation, cohomology jump loci, propagation property, generic vanishing, signed Euler characteristic, abelian duality space, abelian variety}
\subjclass[2010]{32S60, 14F17, 55N25, 55U30}
\begin{document}

\maketitle
\begin{abstract}  
For arbitrary field coefficients $\kk$, we show that $\kk$-perverse sheaves on a complex affine torus satisfy the so-called {\it propagation package}, i.e., the generic vanishing property and the signed Euler characteristic property hold, and 
the corresponding cohomology jump loci satisfy the propagation property and codimension lower bound. The main ingredient used in the proof is Gabber-Loeser's Mellin transformation functor for $\kk$-constructible complexes on a complex affine torus, and the fact that it behaves well with respect to perverse sheaves. 

As a concrete topological application of our sheaf-theoretic results, we study homological duality properties of complex algebraic varieties, via {\it abelian duality spaces}. We provide new obstructions on abelian duality spaces by showing that their cohomology jump loci satisfy a propagation package. This is then used to prove that complex abelian varieties are the only complex projective manifolds which are abelian duality spaces. We also construct new examples of abelian duality spaces. For example, we show that if a smooth quasi-projective variety $X$ satisfies a certain Hodge-theoretic condition and it admits a proper semi-small map (e.g., a closed embedding or a finite map) to a complex affine torus, then $X$ is an abelian duality space.
\end{abstract}
 
 \tableofcontents


\section{Introduction}\label{intro}

Cohomology jump loci of rank-one local systems provide a unifying framework for the study of a host of questions concerning homotopy types of complex algebraic varieties. In particular, they can be used to tackle Serre's problem concerning groups which can be realized as fundamental groups of complex quasi-projective manifolds.

By the classical {Albanese map} 
construction (e.g., see \cite{Iit}), cohomology jump loci of a complex quasi-projective manifold can be understood via cohomology jump loci of {constructible complexes of sheaves} (or, 
if the Albanese map is proper, of {\it perverse sheaves}) on a semi-abelian variety. 
This motivates our investigation of cohomology jump loci of such complexes, and in particular of cohomology jump loci of perverse sheaves. Throughout this paper, we restrict ourselves to the study of perverse sheaves on a complex affine torus, but we allow any field of coefficients. Specifically, we show that for any field $\kk$, the cohomology jump loci of $\kk$-perverse sheaves on a complex affine torus satisfy the so-called {\it propagation package}, i.e., a list of properties which provide new obstructions on the category of perverse sheaves on a complex affine torus. The present work parallels results of Gabber-Loeser in  the $\ell$-adic setting, see \cite{GL}. 

As a concrete topological application of our sheaf-theoretic results, we study homological duality properties of complex algebraic varieties, via {\it abelian duality spaces}. Abelian duality spaces have been recently introduced in \cite{DSY}, by analogy with the duality spaces of Bieri-Eckmann \cite{BE}, and they are particularly useful for explaining (non-)vanishing properties of the cohomology of rank-one local systems. We pay a particular attention to the corresponding {\it realization problem}, which is aimed at investigating which smooth complex algebraic varieties are abelian duality spaces. We obtain here new obstructions on abelian duality spaces via their cohomology jump loci. Specifically, we show that cohomology jump loci of abelian duality spaces satisfy a propagation package. This is then used to give a complete answer to the realization problem in the smooth projective (or compact K\"ahler) context, while at the same time providing a new topological description of complex abelian varieties. We also identify a general class of complex algebraic varieties which are abelian duality spaces; this includes the classical examples of complements of complex essential hyperplane arrangements and complements of complex toric arrangements. 


In the remaining of the introduction, we give a more detailed account of the results contained in this paper.

\subsection{Perverse sheaves on complex affine torus}
Let $T= (\bC^*)^N$ be  an  $N$-dimensional complex affine torus. Fix a field of coefficients $\kk$, and set $$\Gamma_T:= \kk[\pi_1(T)].$$ 
We let $\spec \Gamma_T$ denote the {\it maximal} spectrum of $\Gamma_T$, and note that 
$\spec \Gamma_T$ can be identified with the {\it character variety} $\Char_{\kk}:=\Hom(\pi_1,\kk^*)$ when $\kk$ is algebraically closed. For future reference, we let $\Char:=\Char_{\bC}$.
 To any point $\chi \in \spec \Gamma_T$ with residue field $\kk_{\chi} =\Gamma_T/ \chi $, one associates a rank-one local system $L_{\chi}$ of $\kk_{\chi}$-vector spaces on $T$. 
 
\bd \label{perversejump} For any perverse sheaf $\sP\in \Perv(T,\kk)$, the {\it cohomology jump loci of $\sP$} are defined as
 $$\sV^i(T,\sP): = \{\chi \in \spec \Gamma_T \mid H^i(T, \sP \otimes_\kk L_\chi) \neq 0\}.$$
 \ed 
One of the goals of this paper is to show that these jump loci satisfy a list of properties, which we refer to as the {\it propagation package}. In particular, such properties impose new obstructions on the category of perverse sheaves on the  complex affine torus $T$. Specifically, 
 the following result holds:
 \bt  \label{main} Let $\kk$ be a field.  For any perverse sheaf $\sP\in \Perv(T,\kk)$, the cohomology jump loci of $\sP$ satisfy the following properties:
\begin{enumerate}
\item[(i)] {\it Propagation property}:
$$ 
\spec \Gamma_T \supseteq \sV^{0}(T,\sP) \supseteq \sV^{-1}(T,\sP) \supseteq \cdots \supseteq \sV^{-N}(T,\sP).
$$
\item[(ii)] {\it Codimension lower bound}: for any $i\geq 0$, $$ \codim \sV^{-i}(T,\sP) \geq i.$$
\item[(iii)]  If  $V$ is an irreducible component of $\sV^0(T,\sP)$ of codimension $d$, then $V \subset \sV^{-d}(T,\sP).$ In particular, $V$ is also an irreducible component of $\sV^{-d}(T,\sP)$.
\item[(iv)]  If $\sV^0(T,\sP)$ has codimension $d\geq 0$, then $$\sV^0(T,\sP)= \sV^{-1}(T,\sP)= \cdots = \sV^{-d}(T,\sP) \neq \sV^{-d-1}(T,\sP).$$  If moreover $\codim \sV^{-k}(T,\sP)>k$ for all $k>d$, then $\sV^0(T,\sP)$ is pure-dimensional.
\item[(v)] {\it Generic vanishing}: there exists a non-empty Zariski open subset $U \subset \spec \Gamma_T $ such that,  for any maximal ideal 
$\chi\in U$,  $H^{i}(T, \sP\otimes_{\kk} L_\chi)=0$ for all $i\neq 0$.
\item[(vi)] {\it Signed Euler characteristic property}:  $$\chi(T,\sP)\geq 0.$$
Moreover, the equality holds if and only if $\sV^0(T,\sP) \neq \spec \Gamma_T$.
\end{enumerate} 
\et

The above result can be viewed as a topological counterpart of similar properties satisfied by the Green-Lazarsfeld {\it algebraic jump loci} of topologically trivial line bundles, see \cite{GLa,GrLa}. The propagation package for algebraic jump loci was used by Chen-Hacon \cite{CH} to give a birational characterization of abelian varieties. Various topological applications of the above theorem will be given in Section \ref{apl}.

The main ingredient used for proving Theorem \ref{main} is Gabber-Loeser's Mellin transformation functor for $\kk$-constructible complexes on the complex affine torus $T$,  and the fact that it behaves well with respect to perverse sheaves. More precisely, in Theorem \ref{GL} we extend the $\ell$-adic context of Gabber-Loeser \cite[Theorem 3.4.1]{GL} to arbitrary field coefficients $\kk$, and show that the Mellin transformation is a $t$-exact functor for $\kk$-perverse sheaves on a complex affine torus. This fact is then combined with commutative algebra techniques (see Proposition \ref{prop}) to obtain a proof of Theorem \ref{main}.
 
 \br  The signed Euler characteristic property was proved in the $\ell$-adic context by Gabber-Loeser, see \cite[Corollary 3.4.4]{GL} (though it is credited in loc.cit. to Laumon). For $\bC$-perverse sheaves on semi-abelian varieties, a similar signed Euler characteristic property was obtained in  \cite[Corollary 1.4]{FK}.  The generic vanishing property for perverse sheaves on affine tori, abelian varieties and, respectively, semi-abelian varieties was proved in various settings (often in connection with the Tannakian formalism), see, e.g.,  
 \cite[Theorem 2.1]{Kra}, \cite[Theorem 1.1]{KW}, \cite[Corollary 7.5]{Sch}, \cite[Vanishing Theorem]{We}, \cite[Theorem 1.1]{BSS}, \cite{LMW}.
 The codimension lower bound is implicit in \cite[Proposition 6.3.2]{GL} in the $\ell$-adic setting, on affine tori defined over an algebraically closed field of positive characteristic. 
If $\kk$ is a field of characteristic zero, an  improved codimension lower bound for $\kk$-perverse sheaves on a complex abelian variety was obtained by Schnell \cite{Sch}, see also \cite[Theorem 1.3]{BSS}, and it was used to completely characterize perverse sheaves in the abelian context. 
 \er
 
  \br
When $\kk=\cc$, it follows from \cite[Theorem 10.1.1]{BW17} that for any $i$, $\sV^i(T,\sP)$ is a finite union of translated subtori. Moreover, the above statement still holds when $T$ is any smooth complex variety (in which case, $\Gamma_T=\cc[H_1(T, \zz)]$) and $\sP$ is any bounded constructible complex over $\cc$ on $T$. 
 \er

 
\subsection{Abelian duality spaces} Another important application of the $t$-exactness of the Mellin transformation is to detect {\it abelian duality spaces}. 

Abelian duality spaces were introduced by Denham-Suciu-Yuzvinsky in \cite{DSY}, by analogy with the {\it duality spaces} of Bieri-Eckmann \cite{BE}, and they were used in loc.cit. for explaining some previously conjectural behavior of the cohomology of abelian representations. Specifically, it was shown in \cite[Theorem 1.1]{DSY} that abelian duality spaces satisfy a propagation property for their cohomology jump loci, similar to that of Theorem \ref{main}($i$). In this paper we show that abelian duality spaces satisfy the whole propagation package.

  
Let $X$ be a connected finite CW complex. Denote $\pi_1(X)$ by $G$, and its abelianization by $G^{ab}$. There is a canonical $\bZ [G^{ab}]$-local coefficient system on $X$, whose monodromy action is given by the composition of the quotient $G\to G^{ab}$ with the natural multiplication  $G^{ab}\times \bZ[ G^{ab}] \to \bZ[ G^{ab}]$. 
\bd \label{DSY} \cite{DSY} A finite connected CW complex $X$ is called an {\it abelian duality space} of dimension $n$ if: 
\begin{itemize} \item[(a)] $H^i(X, \bZ [G^{ab}])=0$ for all $i\neq n$,  \item[(b)] $H^n(X, \bZ[ G^{ab}])$ is non-zero and a torsion-free $\bZ$-module.\end{itemize} \ed

 This is equivalent to the statement that the (integral) compactly supported cohomology of the universal abelian cover of $X$ is concentrated in a single dimension, $n$,  where it is torsion-free.

A {\it duality space} \cite{BE} is defined similarly, by using $G$ instead of $G^{ab}$ in the above definition (hence by replacing the universal abelian cover by the universal cover). 
 As pointed out in \cite{DSY}, there are duality spaces which are not abelian duality spaces, and the other way around. In Section \ref{abd}, we extend the notion of abelian duality space, and introduce what we call {\it partially abelian duality spaces}  with respect to a homomorphism $\phi:G \to G'$ to an abelian group $G'$. 

Duality spaces and abelian duality spaces enjoy (homological) duality properties similar to Serre duality for projective varieties (and less restrictive than Poincar\'e duality). 
For example, if $X$ is an abelian duality space of dimension $n$,
then for any $\bZ[G^{ab}]$-module $A$, one has isomorphisms (\cite{DSY}):
$$H^i(X,A) \cong H_{n-i}(G^{ab},B \otimes_{\bZ} A),$$
where $B:=H^n(X, \bZ [G^{ab}])$ is called the {\it dualizing $ \bZ[G^{ab}]$-module}. 

\vspace{.55mm}

Our motivation for studying abelian duality spaces comes from the following {\it realization question}: which smooth complex algebraic varieties are (abelian) duality spaces? Partial answers in the projective/K\"ahler context are discussed in Section \ref{real} below. As an extension of results from \cite{DSY}, we also provide here further obstructions on abelian duality spaces via their cohomology jump loci, see Theorem \ref{ad}. 

\bd  Let $\kk$ be either $\bZ$ or a field. Set $\Gamma_{G^{ab}}= \kk[G^{ab}],$ with maximal spectrum  $\spec \Gamma_{G^{ab}}$. The {\it cohomology jump loci of $X$} with $\kk$-coefficients are given by
 $$\sV^i(X): = \{\chi \in \spec \Gamma_{G^{ab}} \mid H^i(X,  L_\chi) \neq 0\},$$
  where for any  maximal ideal $\chi \in \spec \Gamma_{G^{ab}}$ with residue field $\kk_{\chi}  =\Gamma_{G^{ab}}/ \chi $, $L_\chi$ denotes the corresponding rank $1$ local system of $\kk_{\chi}$-vector spaces on $X$. 
 \ed 
 
In Theorem \ref{pad} we show that abelian duality spaces satisfy the propagation package, namely we have the following result.
\bt \label{ad} Let $X$ be an abelian duality space of dimension $n$. Fix $\kk$ as either $\bZ$ or any field. Then the cohomology jump loci of $X$ with $\kk$-coefficients satisfy the following  properties:
\begin{enumerate}
\item[(i)] {\it Propagation property}:
$$ 
\spec \Gamma_{G^{ab}} \supseteq \sV^{n}(X) \supseteq \sV^{n-1}(X) \supseteq \cdots \supseteq \sV^{0}(X).
$$
\item[(ii)] {\it Codimension lower bound}: for any $i\geq 0$, $$ \codim \sV^{n-i}(X)= b_1(X)-\dim  \sV^{n-i}(X) \geq i.$$
\item[(iii)]  If  $V$ is an irreducible component of $\sV^n(X)$ of codimension $d$, then $V \subset \sV^{n-d}(X).$ In particular, $V$ is also an irreducible component of $\sV^{n-d}(X)$.
\item[(iv)]  If $\sV^n(X)$ has codimension $d\geq 0$, then $$\sV^n(X)= \sV^{n-1}(X)= \cdots = \sV^{n-d}(X) \neq \sV^{n-d-1}(X).$$  If moreover $\codim \sV^{n-k}(X)>k$ for all $k>d$, then $\sV^n(X)$ is pure-dimensional. 
\item[(v)] {\it Generic vanishing}: there exists a non-empty Zariski open subset $U \subset \spec \Gamma_{G^{ab}}$ such that,  for any maximal ideal $\chi\in U$,  $H^{i}(X,  L_\chi)=0$ for all $i\neq n$.
\item[(vi)] {\it Signed Euler characteristic property}:  $$  (-1)^n\chi(X)\geq 0.$$
Moreover, the equality holds if and only if $\sV^n(X) \neq \spec \Gamma_{G^{ab}}$.
\end{enumerate} 
 \et

As already mentioned above, it was also shown in \cite[Theorem 1.1]{DSY} that abelian duality spaces satisfy the propagation property for their jumping loci. This propagation property already imposes strong restrictions on the topology of $X$. For example, it has the following implications on the Betti numbers and Euler characteristics. 
\begin{theorem}[\cite{DSY}]\label{Bettinumbers}
If $X$ is an abelian duality space of dimension $n$, then:
\begin{itemize}
\item[(1)] $b_i(X) > 0$, for $0 \leq i \leq n$,
\item[(2)] $b_1(X) \geq  n$,
\item[(3)]  $(-1)^n \chi(X) \geq 0$. 
\end{itemize}
\end{theorem}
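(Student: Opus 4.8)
The plan is to read off all three assertions from the propagation package for the cohomology jump loci $\sV^\bullet(X)\subseteq \spec\Gamma_{G^{ab}}$ proved in Theorem~\ref{ad}, working with $\kk=\cc$ (the Betti numbers and the Euler characteristic being $\cc$-homological invariants, there is no loss in doing so). The only topological facts I will use about a connected finite CW complex $X$ are: $H^0(X,L_\chi)\neq 0$ iff $L_\chi$ is trivial, i.e. iff $\chi$ is the augmentation point $\chi_0=\ker(\cc[G^{ab}]\to\cc)$ (a maximal ideal, with residue field $\cc$); and at $\chi=\chi_0$ one has $H^i(X,L_{\chi_0})=H^i(X,\cc)$, of dimension $b_i(X)$. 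Since $X$ is nonempty and connected, $b_0(X)=1$, so $\chi_0\in\sV^0(X)$. Now Theorem~\ref{ad}(i) says $\sV^0(X)\subseteq \sV^1(X)\subseteq\cdots\subseteq\sV^n(X)$, hence $\chi_0\in\sV^i(X)$ for every $0\le i\le n$, which by the definition of $\sV^i$ at $\chi_0$ means $H^i(X,\cc)\neq 0$, i.e. $b_i(X)>0$ for $0\le i\le n$. This is (1).

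For (2) I would first note $\sV^0(X)=\{\chi_0\}$: by the fact quoted above, a rank-one local system with nonzero $H^0$ on a connected $X$ is trivial, so the only $\chi$ with $H^0(X,L_\chi)\neq 0$ is $\chi_0$. Thus $\sV^0(X)=\sV^{\,n-n}(X)$ is a single closed point, so $\dim\sV^0(X)=0$, and since $\dim\spec\Gamma_{G^{ab}}=\operatorname{rank}G^{ab}=b_1(X)$ and $\chi_0$ lies on the identity component, $\codim_{\spec\Gamma_{G^{ab}}}\sV^0(X)=b_1(X)$. Applying the codimension lower bound of Theorem~\ref{ad}(ii) with $i=n$ gives $b_1(X)=\codim\sV^0(X)\ge n$, which is (2). (If one only has the bare propagation property of \cite{DSY}, part (2) genuinely needs the extra codimension information of Theorem~\ref{ad}(ii); that is the one point of the argument which is not purely formal, and it is exactly where the new sheaf-theoretic input of this paper is used.)

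Finally, (3) I would deduce from generic vanishing together with the insensitivity of the Euler characteristic to rank-one local coefficients: for a finite CW complex, $\chi(X)=\chi(X,L_\chi)=\sum_i(-1)^i\dim_{\cc}H^i(X,L_\chi)$ for every $\chi$, both sides being $\sum_j(-1)^j(\#\,j\text{-cells})$. Choosing $\chi$ in the nonempty Zariski-open subset $U$ provided by Theorem~\ref{ad}(v) (over $\cc$ such $\chi$ is an honest closed point, so $L_\chi$ is an ordinary rank-one $\cc$-local system), all $H^i(X,L_\chi)$ with $i\neq n$ vanish, whence $\chi(X)=(-1)^n\dim_{\cc}H^n(X,L_\chi)$ and therefore $(-1)^n\chi(X)\ge 0$; moreover this is $0$ precisely when $H^n(X,L_\chi)=0$ for generic $\chi$, i.e. when $\sV^n(X)\neq\spec\Gamma_{G^{ab}}$ — recovering also the equality statement of Theorem~\ref{ad}(vi). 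The whole deduction is short once Theorem~\ref{ad} is available; the only places needing a word of care are the identification $\sV^0(X)=\{\chi_0\}$ and the normalization $\dim\spec\Gamma_{G^{ab}}=b_1(X)$ (for $\kk=\bZ$ one would replace the point $\chi_0$ by the one-dimensional subscheme cut out by the augmentation ideal and argue fibrewise over $\spec\bZ$, but this is unnecessary here since the statements concern $\cc$-Betti numbers and $\chi(X)$).
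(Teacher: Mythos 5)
Your deduction is correct, and it is worth noting that the paper itself does not prove Theorem~\ref{Bettinumbers}: the theorem is credited to \cite{DSY} and is quoted as background, so there is no in-paper proof to compare against. Your argument is nonetheless a clean and self-contained derivation from Theorem~\ref{ad}, which the paper does prove (via Theorem~\ref{pad}, Lemma~\ref{equal}, and Proposition~\ref{prop}), and there is no circularity in doing so. The steps all check out: $\chi_0\in\sV^0(X)\subseteq\sV^i(X)$ for $0\le i\le n$ gives $b_i(X)>0$; $\sV^0(X)=\{\chi_0\}$ has dimension $0$ so Theorem~\ref{ad}(ii) at $i=n$ gives $b_1(X)\ge n$; and the local-system Euler characteristic being independent of $\chi$ together with generic vanishing yields $(-1)^n\chi(X)\ge 0$ (for this last point you could equally well just invoke Theorem~\ref{ad}(vi) directly).

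One small inaccuracy in your commentary, not in the proof itself: you remark that part (2) ``is exactly where the new sheaf-theoretic input of this paper is used.'' In fact, Theorem~\ref{ad} is proved without any sheaf theory. Its proof reduces (via Lemma~\ref{equal}) to the purely commutative-algebraic Proposition~\ref{prop}, applied to the single $\Gamma_{G^{ab}}$-module $H^n(X,\kk[G^{ab}])$ that an abelian duality space provides for free; this codimension bound for finitely generated modules over a Cohen--Macaulay domain is classical (Eisenbud). The Mellin transformation and perverse-sheaf machinery are what the paper uses to \emph{produce} abelian duality spaces from semi-small maps to tori (Theorem~\ref{veryaffine}) and to handle perverse sheaves that are not a priori concentrated in a single degree (Theorem~\ref{main}); they are not needed once the abelian-duality hypothesis is in hand.
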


In the recent preprint \cite{DS} by Denham-Suciu,  it is shown that complements of certain classes of arrangements of hypersurfaces (including projective hyperplane arrangements, toric arrangements and elliptic arrangements) are both duality and abelian duality spaces. In particular, the whole propagation package of Theorem \ref{ad} is valid for these examples.

\medskip

The following result gives a criterion to detect abelian duality spaces from a relative perspective (see Theorem \ref{veryaffine}). 
 \begin{theorem} \label{abel}
Let $X$ be a smooth complex quasi-projective variety of dimension $n$. Assume that  the mixed Hodge structure on $H^1(X, \bQ)$ is pure of type $(1,1)$, or equivalently, there exists a smooth compactification $\overline{X}$ of $X$ with $b_1(\overline{X})=0$.   If $X$ admits a proper semi-small map $f: X\to T=(\bC^*)^N$ (e.g., a finite map or a closed embedding), then $X$ is an abelian duality space of dimension $n$. 
\end{theorem}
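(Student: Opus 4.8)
The plan is to deduce Theorem~\ref{abel} from the sheaf-theoretic statement (Theorem~\ref{main}) together with the abelian-duality criterion encoded in Theorem~\ref{ad} and its converse direction. The key observation is that proving $X$ is an abelian duality space of dimension $n$ amounts to showing that $H^i(X,\bZ[G^{ab}])$ vanishes for $i\neq n$ and is torsion-free for $i=n$, and by standard arguments (base change, universal coefficients) this can be checked after reducing to the cohomology jump loci $\sV^i(X)$ over all fields $\kk$: concretely, $X$ is an abelian duality space iff $\sV^i(X)=\emptyset$ for $i\neq n$ and iff there is suitable generic (co)homology concentration, plus the torsion-freeness at the top which follows from the structure of the relevant modules over the group ring. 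So the real content is a vanishing statement for $H^i(X, L_\chi)$ for $i\neq n$, for $\chi$ in a dense subset.

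\medskip

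First I would replace $X$ by its pushforward: since $f\colon X\to T$ is proper, $Rf_*\kk_X$ is a bounded constructible complex on $T$, and $H^i(X, f^*L\otimes\kk_X)=H^i(T, Rf_*\kk_X\otimes L)$ for a rank-one local system $L$ on $T$ pulled back along $f$. Because $H^1(X,\bQ)$ is pure of type $(1,1)$, the induced map $f_*\colon H_1(X,\bZ)\to H_1(T,\bZ)=\pi_1(T)$ identifies $G^{ab}$ (modulo torsion and finite issues that one handles separately) with $\pi_1(T)$, so that the jump loci of $X$ are governed by those of $Rf_*\kk_X$ on $T$; this is where I would invoke the hypothesis on the compactification $\ol X$ with $b_1(\ol X)=0$ to ensure that every rank-one local system on $X$ extends from $T$, i.e. that no "extra" characters contribute. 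Second, and this is the crux, I would use semi-smallness of $f$: a proper semi-small map has the property that $Rf_*\kk_X[n]$ is a perverse sheaf on $T$ (this is the defining cohomological consequence of semi-smallness for a map from a smooth variety of dimension $n$). Therefore $\sP:=Rf_*\kk_X[n]\in\Perv(T,\kk)$, and Theorem~\ref{main} applies verbatim to $\sP$.

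\medskip

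Third, I would translate: $\sV^i(T,\sP)=\sV^{i+n}(X)$ under the identification above, so Theorem~\ref{main}(v) gives a dense open $U\subset\spec\Gamma_T$ on which $H^{i}(X,L_\chi)=0$ for $i\neq n$, which is precisely generic vanishing for $X$ in degree $n$. To upgrade generic vanishing to the \emph{exact} vanishing $H^i(X,\bZ[G^{ab}])=0$ for $i\neq n$ required by Definition~\ref{DSY}, I would argue that $H^i(X,\kk[G^{ab}])$ is a finitely generated $\kk[G^{ab}]$-module whose support is contained in $\bigcup_j \sV^j(X)$ for $j\leq i$ (using the propagation direction $\sV^{j-1}\subseteq\sV^j$), and that a module supported on a proper subvariety but required to vanish generically must in fact be a torsion module; one then rules out torsion by a dimension/duality argument, namely $X$ being semi-small forces the relevant local cohomology/depth estimate. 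Alternatively and more cleanly, I would cite the converse half of the DSY framework: a finite CW complex whose cohomology jump loci (over every field) satisfy generic vanishing \emph{plus} the signed Euler characteristic identity of Theorem~\ref{main}(vi), together with a depth bound, is an abelian duality space — and Theorem~\ref{main} supplies exactly these for $\sP$. The torsion-freeness of $H^n(X,\bZ[G^{ab}])$ would come from the fact that, $\sP$ being perverse and $f$ semi-small, the top cohomology is computed by a module of the form $H^0$ of the Mellin transform, which by $t$-exactness (Theorem~\ref{GL}) is concentrated in a single degree, hence torsion-free over the integral group ring after the standard base-change bookkeeping.

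\medskip

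The main obstacle I anticipate is the bookkeeping between the two group rings $\kk[G^{ab}]$ and $\Gamma_T=\kk[\pi_1(T)]$: one must show that the purity hypothesis on $H^1(X,\bQ)$ really does make $f_*\colon G^{ab}\to\pi_1(T)$ an isomorphism up to finite kernel/cokernel (so that jump loci correspond, including their codimensions, which is needed for part (ii) of Theorem~\ref{ad}), and that torsion phenomena in $G^{ab}$ — which do not see the affine torus $T$ at all — do not destroy the duality property. Handling the $\kk=\bZ$ case, where one cannot simply pass to residue fields, requires the torsion-freeness input and is where the hypothesis that $\ol X$ has $b_1=0$ does real work; I would reduce the integral statement to the statement over all prime fields $\bF_p$ and $\bQ$ via the universal coefficient spectral sequence, using that torsion-freeness of $H^n$ is equivalent to the $\bF_p$-dimension of $H^n(X,\bF_p[G^{ab}])$ being independent of $p$ and matching the $\bQ$-dimension, which in turn follows from the constancy of Euler characteristics (Theorem~\ref{main}(vi)) together with the vanishing in all other degrees.
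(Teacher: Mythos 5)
Your proposal misses the central mechanism of the paper's proof and, as written, has a gap that the stated steps cannot close. The crucial input is not generic vanishing (Theorem~\ref{main}(v)), but the $t$-exactness of the Mellin transformation (Theorem~\ref{GL}): since $f$ is proper semi-small, $Rf_*\kk_X[n]$ is perverse for $\kk=\bQ$ or $\Fp$, so $\sM_*(Rf_*\kk_X)$ is concentrated in degree $n$, and because $H^i(\sM_*(Rf_*\kk_X))\cong H^i(X,\kk[\pi_1(T)])$, one gets the \emph{module-level} vanishing $H^i(X,\kk[\pi_1(T)])=0$ for $i\neq n$ directly. Your route instead tries to promote the pointwise vanishing $H^i(X,L_\chi)=0$ for generic $\chi$ to the module-level statement. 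But generic vanishing only shows $H^i(X,\kk[G^{ab}])$ is a torsion $\kk[G^{ab}]$-module (supported on a proper closed subset), and a torsion module is not zero. Your appeal to ``a dimension/duality argument, namely $X$ being semi-small forces the relevant local cohomology/depth estimate'' and to ``the converse half of the DSY framework'' does not patch this: Theorem~\ref{ad} (and the result of \cite{DSY}) go in one direction only (abelian duality space $\Rightarrow$ propagation package); there is no converse criterion in the paper, and semi-smallness by itself gives no depth bound on $H^i(X,\kk[G^{ab}])$ that would kill a nonzero torsion module. So the step from generic vanishing to actual vanishing is a genuine gap.

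A second, independent issue is the transition from $\pi_1(T)$ to $G^{ab}=H_1(X,\bZ)$, which you acknowledge as a ``bookkeeping'' obstacle but do not resolve. What you prove directly from $\sP=Rf_*\kk_X[n]$ on $T$ is that $X$ is a \emph{partially} abelian duality space with respect to $f_*\colon\pi_1(X)\to\pi_1(T)$; this is exactly Theorem~\ref{veryaffine}(1). To get the genuine abelian duality property one must work with the group ring of $G^{ab}$, which may have torsion $H_t$ that the torus $T$ cannot see. The paper handles this by passing to the Albanese map: the purity hypothesis on $H^1(X,\bQ)$ forces $\Alb(X)\cong(\bC^*)^{b_1(X)}$ and $\alb_X$ proper and semi-small, and the torsion is absorbed into a rank-$|H_t|$ local system $\sL_t$ of free $\bZ$-modules, so that $Ra_*\sL_X\cong\sM_*(R(\alb_X)_*\sL_t)$ and the part-(1) argument runs verbatim with $\bZ_X$ replaced by $\sL_t$. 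Your proposal treats the torsion only as something to be swept away ``up to finite kernel/cokernel,'' which is not enough to conclude vanishing of $H^i(X,\bZ[G^{ab}])$. Finally, for the integral statement your idea to compare $\bF_p$ and $\bQ$ ``dimensions'' is the right instinct, but the correct formulation is Proposition~\ref{inverse}: a bounded complex of finitely generated free $\bZ[G']$-modules whose $\bQ$- and all $\Fp$-reductions have cohomology concentrated in degree $n$ has integral cohomology concentrated in degree $n$ and torsion-free there; there is no Euler-characteristic comparison involved, and $H^n(X,\bF_p[G^{ab}])$ is not a finite-dimensional $\bF_p$-vector space in general.
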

In particular, Theorem \ref{abel} shows that the complements of hyperplane arrangements and of toric arrangements are abelian duality spaces (but our result does not apply to elliptic arrangement complements).


\subsection{The realization problem for abelian duality spaces}\label{real}
There are plenty of projective manifolds that are duality spaces. For example, in dimension one, all genus $g\geq 1$ smooth complex projective curves are duality spaces. On the other hand, only genus one curves are abelian duality spaces. More generally, any projective manifold which is a ball quotient is a duality space, e.g. see \cite{Yau} for such examples.  On the other hand, we show that complex projective manifolds which are abelian duality spaces are rather rare. More precisely, as a direct application of the propagation property for abelian duality spaces, we have the following result (see Theorem \ref{av}). 
\begin{theorem}\label{abp}
Let $X$ be a complex projective manifold. Then $X$ is an abelian duality space if and only if $X$ is a complex abelian variety. 
\end{theorem}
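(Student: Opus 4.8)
The plan is to prove Theorem~\ref{abp} by playing off the propagation package for abelian duality spaces (Theorem~\ref{ad}) against Poincar\'e duality on the compact manifold $X$. Suppose $X$ is a complex projective manifold of complex dimension $m$ which is an abelian duality space of dimension $n$. The first step is to pin down $n$: since $X$ is a closed oriented manifold of real dimension $2m$, Poincar\'e duality with local coefficients gives $H^i(X,L_\chi)\cong H_{2m-i}(X,L_{\chi^{-1}})$ for every rank-one local system, so the cohomology jump loci are symmetric, $\sV^i(X)=\iota^*\sV^{2m-i}(X)$ where $\iota$ is the inversion on $\spec\Gamma_{G^{ab}}$. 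Combined with the propagation chain $\sV^n(X)\supseteq\sV^{n-1}(X)\supseteq\cdots\supseteq\sV^0(X)$ from Theorem~\ref{ad}(i), and the fact that $\sV^i(X)$ is empty for $i>2m$ and for $i<0$, this forces $n=m$ and moreover forces the jump loci to be symmetric around the middle: $\sV^i(X)=\iota^*\sV^{2m-i}(X)$ with both chains (the one going up from $0$ to $n=m$, via duality the one going down from $2m$ to $m$) nested.

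The second and central step is to show $\sV^0(X)=\spec\Gamma_{G^{ab}}$, i.e.\ that every rank-one local system has nonzero $H^0$, equivalently (for connected $X$) that the only local system with $H^0\neq 0$ is the trivial one unless \emph{all} of them are, so we must be in the case $\sV^0(X)$ is everything. Here is where I would use the signed Euler characteristic property Theorem~\ref{ad}(vi): $(-1)^n\chi(X)\ge 0$ with equality iff $\sV^n(X)\neq\spec\Gamma_{G^{ab}}$. But for a compact complex manifold, by Poincar\'e duality the trivial local system contributes and one computes $\chi(X)=\chi(X,L_\chi)$ for all $\chi$ (Euler characteristic is constant in families of local systems), and the sign is $(-1)^m$. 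If $\sV^n(X)$ is a proper subvariety, then $\chi(X)=0$; I then want to derive a contradiction with $\sV^0(X)$ being nonempty and propagation forcing $\sV^0(X)\subseteq\sV^1(X)\subseteq\cdots\subseteq\sV^n(X)$ proper, together with the duality symmetry which would make the \emph{whole} cohomology of a generic $L_\chi$ vanish --- impossible since $\chi(X,L_\chi)=\chi(X)$ can be made nonzero, or rather: generic vanishing (v) plus $\chi(X)=0$ forces $H^n(X,L_\chi)=0$ generically, hence $\sV^n(X)\neq\spec\Gamma_{G^{ab}}$ is consistent, but then \emph{$X$ has a nonzero Betti number contribution} $b_1(X)\ge n=m$ from Theorem~\ref{Bettinumbers}(2), and $b_1(X)=2\dim\Alb(X)$; pushing this together with the structure of jump loci should show $\Alb(X)$ has dimension $m$, i.e.\ the Albanese map $\alb\colon X\to\Alb(X)$ is generically finite.

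The third step is to upgrade "Albanese generically finite of the right dimension" to "$X$ is an abelian variety." I would argue that the Albanese map must in fact be an isomorphism. One route: if $\alb$ is generically finite but not an isomorphism, it is either ramified or has positive-dimensional fibers somewhere or is a nontrivial cover, and in each case the pullback of generic rank-one local systems from $\Alb(X)$ (which on the abelian variety have cohomology concentrated in degree $0$, since abelian varieties \emph{are} abelian duality spaces of dimension $m$ with $\sV^0=\spec\Gamma$ everything) would have to match the cohomology jump loci of $X$; the propagation package (specifically the codimension lower bound (ii) with $b_1(X)=2m$, and the pure-dimensionality statement in (iv)) constrains $\sV^\bullet(X)$ so tightly that $X$ must have $\sV^i(X)=\spec\Gamma_{G^{ab}}$ for all $0\le i\le n$, which via Poincar\'e duality and a Leray-type argument for $\alb$ forces the generic fiber to be a point and forces no ramification, hence $\alb$ is an isomorphism by Zariski's main theorem. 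The converse direction --- every complex abelian variety is an abelian duality space --- is the easy half: for $A$ an abelian variety of dimension $g$, $A\simeq (S^1)^{2g}$ as a CW complex (up to homotopy its classifying space is a torus), $G^{ab}=\bZ^{2g}$, and $H^\bullet(A,\bZ[\bZ^{2g}])$ computes the compactly supported cohomology of $\bR^{2g}$, concentrated in degree $2g=\dim_{\bR}A$ and torsion-free, so $A$ is an abelian duality space of dimension $\dim_{\bR}A$ --- note this matches $n=m$ only after recalling the convention that "dimension $n$" here is the \emph{real} homological dimension, so one must be careful to track whether $n$ means $\dim_{\bC}X$ or $\dim_{\bR}X$ throughout.

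The main obstacle I anticipate is the rigidity argument in the third step: translating the combinatorial/cohomological constraints of the propagation package into the geometric statement that the Albanese map is an isomorphism. The jump loci being "as large as possible" does not by itself obviously preclude, say, an isogeny onto an abelian variety composed with a blow-up, and ruling out such configurations will likely require invoking the structure theory of cohomology jump loci of quasi-projective varieties (they are finite unions of torsion-translated subtori, as noted in the excerpt following Theorem~\ref{main}) together with the fact that for a \emph{projective} manifold the only component passing through the origin that can support the full propagation package is the whole character torus, which happens precisely when $X$ is an abelian variety. A clean way to organize this may be to first prove $b_1(X)\ge 2n$ and $\chi(X)=(-1)^n\,e$ for the Euler number, deduce via Hodge theory that $X$ has the rational cohomology of an abelian variety (all Hodge numbers forced), and then apply a Chen--Hacon / Kawamata-type characterization of abelian varieties by their Albanese map being birational plus $q(X)=\dim X$ --- but since we want the manifold itself (not just birationally), the projective and K\"ahler hypothesis lets us conclude the Albanese map is an unramified cover, hence an isomorphism onto $\Alb(X)$.
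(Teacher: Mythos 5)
Your first step is essentially the right opening move, but it already contains a sign error that propagates: from Poincar\'e duality $\sV^{2m}(X)\ni\mathbf{1}$ and Corollary~\ref{dim} (vanishing of jump loci above the duality dimension $n$) one gets $n\geq 2m$, while propagation plus $\sV^i(X)=\emptyset$ for $i>2m$ gives $n\leq 2m$; the conclusion is $n=2m$, the \emph{real} dimension, not $n=m$ as you write. You half-acknowledge this confusion at the end of your third step but never resolve it, and several of your later numerical comparisons (e.g.\ invoking $b_1(X)\geq n$ from Theorem~\ref{Bettinumbers} and wanting it to read $b_1(X)\geq 2n$) depend on getting this right.

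The central problem is in your second step. You set out to prove $\sV^0(X)=\spec\Gamma_{G^{ab}}$, i.e.\ that \emph{every} rank-one local system has nonzero $H^0$. This is impossible: for connected $X$, $H^0(X,L_\chi)\neq 0$ if and only if $L_\chi$ is trivial, so $\sV^0(X)=\{\mathbf{1}\}$ always. The correct conclusion at this stage — and the one the paper draws — is the opposite extreme: propagation plus $\sV^{2n}(X)=\{\mathbf{1}\}$ force \emph{all} the jump loci to collapse, $\sV^0(X)=\sV^1(X)=\cdots=\sV^{2n}(X)=\{\mathbf{1}\}$, i.e.\ all the $\sV^i$ are zero-dimensional. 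Your subsequent reasoning with the signed Euler characteristic (vi) and generic vanishing (v) is then chasing a conclusion that cannot hold, and this part of the argument does not recover.

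Finally, your third step identifies the right geometric goal (Albanese generically finite, then an isomorphism) but does not supply the key ingredient. The paper's proof hinges on quoting \cite[Theorem 2.1]{Wa17}: for a projective manifold with all cohomology jump loci zero-dimensional, the Albanese map is surjective and \emph{all} derived pushforwards $R^i\alb_{X*}(\bC_X)$ are local systems on $\Alb(X)$. This is a substantive external input, not something that follows formally from the propagation package. Once one has it, the rest is a short geometric chain: $\dim X\geq\dim\Alb(X)=\tfrac{1}{2}b_1(X)$ plus $b_1(X)\geq 2n$ force $b_1(X)=2n$ and $\alb_X$ generically finite; the local-system condition on $R^i\alb_{X*}$ for $i\geq 1$ forces them to vanish, so the fibers are zero-dimensional and $\alb_X$ is finite; the local-system condition on $R^0\alb_{X*}$ rules out ramification, so $\alb_X$ is a covering; a finite cover of an abelian variety is an abelian variety. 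Your sketch gestures at "structure theory of jump loci" and "Chen--Hacon / Kawamata-type" results but never isolates a statement strong enough to carry this out, and the worry you raise (an isogeny composed with a blow-up) is exactly what the $R^i\alb_{X*}$-are-local-systems input is needed to exclude. As written, the proposal has a genuine gap at this point.
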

Theorem \ref{abp} can be viewed as a topological counterpart of the Chen-Hacon {\it birational} characterization of abelian varieties \cite{CH}. Furthermore, the above result can be extended to the K\"ahler context, namely we show in Theorem \ref{av} that compact complex tori are the only compact K\"ahler manifolds that are abelian duality spaces. 

\subsection{Summary}

The paper is organized as follows. 

In Section \ref{alg}, we recall the definition of several algebraic notions (including fitting ideals, jumping ideals and, resp., cohomology jump loci of a bounded complex of $R$-modules with finitely generated cohomology) and prove an algebraic version of the propagation package.  

In Section \ref{top}, we recall the definition of the Mellin transformation functor for $\kk$-constructible complexes on a complex affine torus, and show that it is $t$-exact (see Theorem \ref{GL}). As an application of this fact, we prove Theorem \ref{main} and discuss several topological implications.

In Section \ref{abd}, we introduce the notion of partially abelian duality space as a generalization of abelian duality spaces, and show in Theorem \ref{pad} that such spaces satisfy the propagation package. Furthermore, we use once more the Mellin transformation functor to construct new examples of (partially) abelian duality spaces in Theorem \ref{veryaffine}. 

Section \ref{sex} is devoted to a discussion of several concrete examples of (partially) abelian duality spaces.

In Section \ref{projective}, we prove that compact complex tori are the only compact K\"ahler manifolds that are abelian duality spaces. In particular, abelian varieties are the only complex projective manifolds which are abelian duality spaces. We also point out that the Singer conjecture admits an equivalent formulation in terms of duality spaces.

\medskip

\textbf{Acknowledgments.} We are grateful to Nero Budur, J\"org Sch\"urmann and Alex Suciu for useful discussions. The authors  thank the Mathematics Department at the University of Wisconsin-Madison, East China Normal University (Shanghai, China) and University of Science and Technology of China (Hefei, China)  for hospitality during the preparation of this work. 
 

\section{Cohomology jump loci of an $R$-module}\label{alg}
Let $R$ be a noetherian domain, and let $E^\ubul$ be a bounded above complex of $R$-modules with finitely generated cohomology $R$-modules. In this section, we recall the notion of cohomology jump loci for the complex $E^\ubul$. 

By a result of Mumford (see \cite[III.12.3]{Ha}), there exists a bounded above complex $F^\ubul$ of finitely generated free $R$-modules, which is a quasi-isomorphic to $E^\ubul$. 

\bd For any integer $k$ and a map $\phi$ of free $R$-modules, let $I_{k}\phi$ denote the determinantal ideal of $\phi$ (i.e., the ideal of minors of size $k$ of the matrix of $\phi$), see \cite[p.492-493]{E}.  Then, in the above notations, the {\it degree $i$ fitting ideal} of $E^\ubul$ is defined as:
$$I^i(E^\ubul)= I_{\rank \partial^{i}}( \partial^i),$$
and the {\it  degree $i$ jumping ideal} of $E^\ubul$ is defined as: $$J^{i} (E^\ubul)=I_{\rank(F^{i})}(\partial^{i-1}\oplus \partial^{i}),
$$
where  $\partial^{i-1}: F^{i-1}\to F^{i}$ and $\partial^{i}: F^{i}\to F^{i+1}$ are differentials of the complex $F^\ubul$. 

We define the {\it $i$-th cohomology jump loci} of $E^\ubul$ as 
$$  \sV^{i} (E^\ubul) :=  \spec(R/  \rad J^i(E^\ubul)),  
$$ 
where  $\rad I$ denotes the radical ideal of the ideal $I$ in $R$, and $\spec$ denotes the maximal spectrum. By definition, $\sV^i(E^\ubul)$ is naturally a subset of $\spec(R)$ induced by the quotient map $R\to R/ \rad J^i(E^\ubul)$. 
\ed 
It is known that $I^i(E^\ubul)$,  $J^i(E^\ubul)$ and $\sV^i(E^\ubul)$ do not depend on the choice of the finitely generated free resolution $F^\ubul$ for $E^\ubul$, see \cite[Section 20.2]{E} and \cite[Section 2]{BW}.

\br\label{alt}
An equivalent definition of $\sV^{i} (E^\ubul)$ can be given as follows:
$$ \sV^{i} (E^\ubul):= \{ \chi \in \spec R \mid H^i (F^\ubul \otimes _R R/\chi) \neq 0 \},$$ 
 with $F^\ubul $ as above; see \cite[Corollary 2.5]{BW}.\er
 

Let $M$ be a finitely generated $R$-module. Then $M$ can be viewed as a complex of $R$-modules concentrated in degree zero. There exists a finitely generated free resolution for $M$: 
$$ \cdots \to F^{-i-1} \overset{\partial^{-i-1}}{\to} F^{-i} \overset{\partial^{-i}}{\to} F^{-i+1} \to  \cdots \to F^{-1} \overset{\partial^{-1}}{\to} F^0  \to  0 .$$
Let $\Ann M$  denote the annihilator ideal of $M$ and let $\Ass(M)$ denote the collection of prime ideals of $R$ associated to $M$, i.e.,  if $\fp$ is a prime ideal in $R$ then $\fp \in \Ass(M)$ if $\fp$ is the annihilator of an element of $M$.
Let us now recall the following results from \cite[Theorem 20.9, Corollary 20.12, Corollary 20.14]{E}.
\bl \label{E}    Let
$M$ be a finitely generated $R$-module.  Assume that  there exists a finitely generated free resolution of $M$ of finite length. Then the following properties hold:
\begin{enumerate}
\item[(i)]   
$$\rad I^{-i}(M) \subset\rad I^{-i-1}(M)  \text{ for any } i\geq 1.$$
\item[(ii)]  
 $$\depth(I^{-i}(M)) \geq i \  \text{ for any  } i\geq 1.$$
\item[(iii)]  $$\rank F^{-i}= \rank \partial^{-i} +\rank \partial^{-i-1}  \text{ for any } i\geq 1.$$
\item[(iv)]    Let $\fp$ be a prime ideal of $R$, and let $\fp_\fp$ denote the corresponding maximal ideal in $R_\fp$.  If  $d= \depth \fp_\fp$, then $\fp \in \Ass(M)$ if and only if $I^{-d}(M) \subset \fp. $ 
\item[(v)]  Set $d=\depth \Ann(M)$. Then  $\depth \fp_\fp =d$ for all $\fp\in \Ass(M)$ if and only if $\depth I^{-k}(M) >k$ for all $k>d$.
\item[(vi)]  
If $M$ has a non-zero annihilator, then $\partial^{-1}$ has rank equal to the rank of $F^0$. In this case, writing $d=\depth  I_{\rank F^0}(\partial^{-1})$, we have  
$$ \rad I^{-1}(M)= \cdots = \rad I^{-d}(M) \neq  \rad   I^{-d-1}(M).$$
\end{enumerate}
\el

\br  Let $\kk$ be a principal ideal domain (PID).  In this paper, we always work with Laurent polynomial rings $R=\kk[t_{1},t_1^{-1}, \cdots, t_N,t_N^{-1}]$, for some positive integer $N$.  Note that any finitely generated projective $R$-module  is free in this case \cite[Theorem 8.13]{BG}, hence every finitely generated $R$-module admits a free resolution with length $\leq N$ for $\kk$ a field and length $\leq N+1$ for $\kk$ a PID. Moreover,  since $R$ is a Cohen-Macaulay ring in this case, the depth of  any ideal in $R$ coincides with its codimension.  
\er

Next we translate the above algebraic statement for fitting ideals into a corresponding statement for jump loci.
\bp \label{prop}  Assume that $R$ is a Cohen-Macaulay domain. Then, with the same assumptions and notations from Lemma \ref{E}, we have that $J^0(M) \subset I^{-1}(M)$, $\rad J^0(M)=  \rad \Ann(M)$, and for any $i\geq 1$, $$\rad I^{-i}(M)= \rad J^{-i}(M).$$ Moreover, the following properties hold:
\begin{enumerate}
\item[(i)]    {\it Propagation property}:
$$ 
\spec(R) \supseteq \sV^{0}(M) \supseteq \sV^{-1}(M) \supseteq \cdots \supseteq \sV^{-i}(M ) \supseteq \sV^{-i-1}(M) \supseteq \cdots.
$$
\item[(ii)] {\it Codimension lower bound}: for any $i\geq 0$, $$ \codim \sV^{-i}(M) \geq i.$$
\item[(iii)]  If  $V$ is an irreducible component of $\sV^0(M)$ of codimension $d$, then $V \subset \sV^{-d}(M).$ In particular, $V$ is also an irreducible component of $\sV^{-d}(M)$. Here the irreducible component should be understood as the minimal prime ideal containing $\rad J^0(M)$ and $\rad J^{-d}(M)$, respectively.
\item[(iv)]  If $\sV^0(M)$ has codimension $d\geq 0$, then $$\sV^0(M)= \sV^{-1}(M)= \cdots = \sV^{-d}(M) \neq \sV^{-d-1}(M).$$  If moreover $\codim \sV^{-k}(M)>k$ for all $k>d$, then $\sV^0(M)$ is pure-dimensional.

\item[(v)] {\it Generic vanishing}: there exists a non-empty Zariski open subset $U \subset \spec(R)$ such that,  for any maximal ideal $\chi \in U$,  $H^{i}(F^\ubul \otimes_R R/\chi)=0$ for all $i\neq 0$.
\item[(vi)] {\it Signed Euler characteristic property}: $$  \sum_{i\leq 0} (-1)^i \rank F^i\geq 0.$$
\end{enumerate}
\ep 

\begin{proof}
By Lemma \ref{E}$(iii)$, we have that $\rank F^{-i}= \rank \partial^{-i} +\rank \partial^{-i-1}.$ It follows that, for any $i\geq 1$,  $$ J^{-i}(M)= I^{-i}(M) \cdot I^{-i-1}(M),$$
so $$ \rad J^{-i}(M)=(\rad  I^{-i}(M) ) \cap (\rad  I^{-i-1}(M)) = \rad  I^{-i}(M), $$
where the last equality follows from  Lemma \ref{E}$(i)$.  For $i=0$,   $$J^0(M)=I_{\rank F^0}(\partial^{-1}) \subset I_{\rank \partial^{-1}}(\partial^{-1}) =I^{-1}(M).$$
The identification  $\rad J^0(M)=  \rad \Ann(M)$ follows from \cite[Proposition 20.7]{E}.
The inclusion $\sV^0(M) \supseteq \sV^{-1}(M)$ follows from  the fact that $J^0(M) \subset I^{-1}(M) $ together with $\rad I^{-1}(M)=\rad J^{-1}(M).$

Let us now recall the following important relation between $\Ass (M)$ and $\rad \Ann M$,  see \cite[Theorem 6.5]{Ma}:
\begin{itemize}
\item[(a)] If a prime ideal $\fp \in \Ass(M)$, then   $\rad \Ann M  \subset \fp .$ 
\item[(b)] The set of minimal prime ideals, which contain $\rad \Ann M$, and the set of minimal elements of $\Ass(M)$   coincide.
\end{itemize}  
Then $(iv)$ follows from Lemma \ref{E}$(v)$ and Lemma \ref{E}$(vi)$.
  If  $V$ is an irreducible component of $\sV^0(M)$ of codimension $d$, then Lemma \ref{E}$(iv)$ yields that $V \subset \sV^{-d}(M)$. Hence $V$ is also an irreducible component of $\sV^{-d}(M)$ because of the codimension bound.

For $(v)$,  one can take $U=\spec (R) \setminus  \sV^{-1}(M)$, after noting that $\spec R \neq \sV^{-1}(M)$ because of the codimension bound.  Then $(vi)$ follows from $(v)$.

The remaining properties follow directly from Lemma \ref{E} and the definition of jump loci. 
\end{proof}


\section{Perverse sheaves on a complex affine torus}\label{top}
For any complex algebraic variety $X$ and any commutative Noetherian ring $R$, we denote by  $D^{b}_{c}(X,R)$ the derived category of bounded cohomologically $R$-constructible complexes of sheaves on $X$. 

In this section, we fix a field of coefficients $\kk$, e.g.,  $\bC$ or $\Fp=\bZ/p\bZ$ (for a prime $p$).


\subsection{Mellin transformation functor and the propagation property}\label{MT}

Let $T= (\bC^{\ast})^{N}$ be an $N$-dimensional complex affine torus. Set $$\Gamma_T:=\kk[\pi_1(T)]\cong \kk[t_{1},t_1^{-1}, \cdots, t_N,t_N^{-1}].$$ 
Let $\sL_T$ be the rank $1$ local system of $\Gamma_T$-modules on $T$ associated to the tautological character $\tau:\pi_1(T) \to \Gamma_T^*$, which maps the generators of $\pi_1(T)$ to the multiplication by the corresponding variables of the Laurent polynomial ring $\Gamma_T$.

 \bd \cite{GL} The {\it Mellin transformation functor} $\sM_{\ast}: D^{b}_{c}(T, \kk) \to D^{b}_{coh}(\Gamma_T)$ is defined as
$$\sM_\ast(\sP) := Ra_\ast( \sL_T\otimes_{\kk}\sP), $$
where  $D^{b}_{coh}(\Gamma_T)$ denotes  the bounded coherent complexes of $\Gamma_T$-modules, and $a :T\to pt$ is  the constant map. \ed

Let  $\Perv(T,\kk)$ denote the category of perverse sheaves with $\kk$-coefficients on $T$. The following result, which is adapted from the $\ell$-adic context of Gabber-Loeser \cite[Theorem 3.4.1]{GL}, shows that the Mellin transformation functor behaves well with respect to the perverse $t$-structure on $D^{b}_{c}(T, \kk)$ and the standard $t$-structure on $D^{b}_{coh}(\Gamma_T)$, namely:
\bt \label{GL} Let $\kk$ be a fixed field. The Mellin transformation functor $\sM_\ast$ is $t$-exact, i.e., for any $\sP\in \Perv(T,\kk)$, we have that $H^{i}(\sM_{\ast}(\sP))=0$ for $i\neq 0$.
\et
 
\begin{proof}
First, we reduce the proof to the case when $\kk$ is algebraically closed. Obviously, the Mellin transformation commutes with field extension, i.e., $$\sM_*(\sP\otimes_\kk \overline{\kk})\cong \sM_*(\sP)\otimes_\kk \overline{\kk}.$$ Once we know that $\sM_*(\sP\otimes_\kk \overline{\kk})$ is concentrated in degree zero, it follows that the same holds for $\sM_*(\sP)$. 

Secondly, instead of working with $\Gamma_T$ and local system $\sL_T$, we localize it at the maximal ideal $m_\lambda=(t_1-\lambda_1, \ldots, t_N-\lambda_N)$. Denote the localization $(\Gamma_T)_{m_\lambda}$ by $\Gamma^\lambda_T$, and let 
$$\sL_T^\lambda=\sL_T\otimes_{\Gamma_T}\Gamma_T^\lambda.$$
Denote the corresponding Mellin transformation functor by $\sM^\lambda_*$, that is,
$$\sM^\lambda_*(\sP) := Ra_\ast( \sL^\lambda_T\otimes_{\kk}\sP)\in D^b_{coh}(\Gamma_T^\lambda).$$
Since $\Gamma^\lambda_T$ is a flat $\Gamma_T$-module, we have that 
$$\sM^\lambda_*(\sP)\cong \sM_*(\sP)\stackrel{L}{\otimes}_{\Gamma_T} \Gamma_T^\lambda\in D^b_{coh}(\Gamma_T^\lambda).$$
Since a $\Gamma_T$-module is zero if and only if its localization at every maximal ideal $m_\lambda$ is equal to zero, it suffices to show that $\sM^\lambda_*$ is $t$-exact for every $\lambda$. This fact is proved in \cite[Section 3.4]{GL} in the $\ell$-adic setting. For the convenience of the reader, we sketch here the arguments adapted to our context. 

The right $t$-exactness of $\sM^\lambda_*$ follows from the Artin vanishing theorem. 

For the left $t$-exactness, we first reduce the proof to the case of a simple perverse sheaf. The abelian category of $\kk$-perverse sheaves is Artinian and Noetherian, and hence we have a well-defined notion of the length of $\kk$-perverse sheaves. Assume that $\sP$ is not simple and let $\sP^\prime$ be  a non-trivial sub-perverse sheaf of $\sP$. Then we have a short exact sequence of $\kk$-perverse sheaves:
$$  0 \to \sP^\prime \to \sP \to \sP^{\prime \prime} \to 0.$$
Applying the Mellin transformation functor to the corresponding distinguished triangle, we get a long exact sequence of $\Gamma_T^\lambda$-modules:
$$ \cdots  \to H^i (\sM^\lambda_*(\sP^\prime)) \to H^i (\sM^\lambda_*(\sP)) \to H^i (\sM^\lambda_*(\sP^{\prime \prime})) \to H^{i+1} (\sM^\lambda_*(\sP^\prime)) \to \cdots$$
By the exactness
of the long exact sequence, it is clear that if the claim holds for  $\sP^\prime$ and $\sP^{\prime\prime}$, then it also holds for $\sP$.  By induction on the length of $\kk$-perverse sheaves, the proof  of left $t$-exactness is reduced to the case of simple perverse sheaves.

Let  $\sP$ be a simple $\kk$-perverse sheaf on $T$.  We need to show the vanishing $H^i(\sM^\lambda_*(\sP))=0$ for all $i<0$. This claim is proved by using induction on the dimension of the torus $T$ as follows. 

Let $p: T\to T'$ be a projection onto a $(N-1)$-dimensional torus $T'$, given by the first $N-1$ coordinates. Since the relative dimension of the affine morphism $p$ is one, the only possibly non-trivial perverse cohomology sheaves $^p \cH^i Rp_*(\sP)$ may appear in the range $i\in \{-1, 0\}$. Since $p$ is a smooth morphism of relative dimension one, it follows from \cite[Page 111]{BBD} that there is a canonical monomorphism of perverse sheaves
\be\label{mn} p^*\left( ^p \cH^{-1} Rp_*(\sP)\right)[1]\hookrightarrow \sP.\ee
So if $^p \cH^{-1} Rp_*(\sP)$ is non-zero, then since $\sP$ is simple, the monomorphism (\ref{mn}) is an isomorphism:
$$p^*\left( ^p \cH^{-1} Rp_*(\sP)\right)[1]\cong \sP.$$
The desired vanishing in this case follows from the corresponding version of \cite[Proposition 3.1.3(d)]{GL}, by using the induction hypothesis applied to the perverse sheaf $^p \cH^{-1} Rp_*(\sP)$ on $T'$. 

On the other hand, if $^p \cH^{-1} Rp_*(\sP)$ is zero, then $Rp_*(\sP)$ is a perverse sheaf on $T'$. 
Notice that we have the isomorphism (compare with \cite[Proposition 3.1.3(c)]{GL})
$$\sM^{\lambda'}_*(Rp_*(\sP))\cong \sM^\lambda_*(\sP)\stackrel{L}{\otimes}_{\Gamma^\lambda_{T}} \Gamma^{\lambda'}_{T'},$$ where $ \Gamma^{\lambda'}_{T'}$ is the localization of $ \Gamma_{T'}$ at the corresponding maximal ideal $m_{\lambda'}$ obtained from $m_{\lambda}$ by forgetting the missing (last) variable.
Let $f\in \Gamma^\lambda_T$ be such that $\Gamma^{\lambda'}_{T'}=\Gamma^\lambda_T/(f)$. Then the complex $\Gamma^\lambda_T\stackrel{f}{\longrightarrow}\Gamma^\lambda_T$ is a free resolution of $\Gamma^{\lambda'}_{T'}$. Thus, 
\begin{equation}\label{tensor}
\sM^{\lambda'}_*(Rp_*(\sP))\cong \sM^\lambda_*(\sP)\otimes_{\Gamma^\lambda_T}(\Gamma^\lambda_T\stackrel{f}{\longrightarrow}\Gamma^\lambda_T).
\end{equation}
At this point, since  $Rp_*(\sP)$ is a perverse sheaf on $T'$, we can apply the induction hypothesis for $Rp_*(\sP)$ to obtain that $H^i(\sM^{\lambda'}_*(Rp_*(\sP)))=0$ for $i<0$. Hence by (\ref{tensor}), we get that the multiplication by $f$
$$H^i(\sM^\lambda_*(\sP))\stackrel{f}{\longrightarrow}H^i(\sM^\lambda_*(\sP))$$
is surjective for $i<0$. Therefore, by Nakayama Lemma for the local ring $\Gamma^\lambda_T$, we obtain the desired vanishing. 
\end{proof}

We can now prove Theorem \ref{main} from Introduction, which we state below for the convenience of the reader. Recall from Definition \ref{perversejump} that for any perverse sheaf $\sP\in \Perv(T,\kk)$, the cohomology jump loci of $\sP$ are defined as:
\be\label{cjl}\sV^i(T,\sP): = \{\chi \in \spec \Gamma_T \mid H^i(T, \sP \otimes_\kk L_\chi) \neq 0\},\ee
where $L_{\chi}$ is the rank one local system of $\kk_{\chi}$-vector spaces on $T$ associated to the maximal ideal $\chi$ of $\Gamma_T$, with $\kk_{\chi}=\Gamma_T/\chi$ the residue field of $\chi$. Then we have the following. 
\bt  \label{12} Let $\kk$ be a fixed field.  For any perverse sheaf $\sP\in \Perv(T,\kk)$, the cohomology jump loci of $\sP$ satisfy the following propagation package:
\begin{enumerate}
\item[(i)] {\it Propagation property}:
$$ 
\spec \Gamma_T \supseteq \sV^{0}(T,\sP) \supseteq \sV^{-1}(T,\sP) \supseteq \cdots \supseteq \sV^{-N}(T,\sP).
$$
\item[(ii)] {\it Codimension lower bound}: for any $i\geq 0$, $$ \codim \sV^{-i}(T,\sP) \geq i.$$
\item[(iii)]  If  $V$ is an irreducible component of $\sV^0(T,\sP)$ of codimension $d$, then $V \subset \sV^{-d}(T,\sP).$ In particular, $V$ is also an irreducible component of $\sV^{-d}(T,\sP)$;
\item[(iv)]  If $\sV^0(T,\sP)$ has codimension $d\geq 0$, then $$\sV^0(T,\sP)= \sV^{-1}(T,\sP)= \cdots = \sV^{-d}(T,\sP) \neq \sV^{-d-1}(T,\sP).$$  If moreover $\codim \sV^{-k}(T,\sP)>k$ for all $k>d$, then $\sV^0(T,\sP)$ is pure-dimensional.
\item[(v)] {\it Generic vanishing}: there exists a non-empty Zariski open subset $U \subset \spec \Gamma_T $ such that,  for any maximal ideal $\chi\in U$,  $H^{i}(T, \sP\otimes_{\kk} L_\chi)=0$ for all $i\neq 0$.
\item[(vi)] {\it Signed Euler characteristic property}:  $$  \chi(T,\sP)\geq 0.$$
Moreover, the equality holds if and only if $\sV^0(T,\sP) \neq \spec \Gamma_T$.
\end{enumerate} 
\et

\begin{proof}
For any $\sP\in \Perv(T,\kk)$, Theorem \ref{GL} shows that $\sM_*(\sP)$ is a bounded coherent complex of $\Gamma_T$-modules with cohomology concentrated in degree zero. Then we can define as in Section \ref{alg} the corresponding jumping ideals  $$J^i(T,\sP):= J^i(\sM_*(\sP))$$ and the jump loci \be\label{ajl} \sV^i(T,\sP):= \sV^i( \sM_*(\sP)).\ee  
Then our theorem follows from Proposition \ref{prop}, provided that we show that the two definitions (\ref{cjl}) and (\ref{ajl}) of jump loci $\sV^i(T, \sP)$ coincide. 

Note that $\sL_T \otimes_{\Gamma_T}  a^*\kk_\chi \cong L_\chi$, hence we have that \footnote{Here, and in the sequel, it is important to tensor by a local system in order for the projection formula for the derived pushforward to hold.}
 $$ Ra_* (\sP \otimes_\kk \sL_T) \otimes_{\Gamma_T} \kk_\chi 
 \cong Ra_*(\sP \otimes_\kk \sL_T \otimes_{\Gamma_T} a^* \kk_\chi)\cong Ra_*(\sP \otimes_\kk L\chi).$$
Let us now take degree $i$-cohomology on the first and last terms. From the last term we get $H^i(X, \sP\otimes_\kk L_\chi)$, which is non-zero if and only if $\chi \in \sV^i(T, \sP)$ of definition (\ref{cjl}). On the other hand, it follows from Remark \ref{alt} that $H^i(Ra_* (\sP \otimes_\kk \sL_T) \otimes_{\Gamma_T} \kk_\chi)$ is non-zero if and only if  $\chi \in \sV^i( \sM_*(\sP)))$ of definition (\ref{ajl}).  \end{proof}


\subsection{Applications} \label{apl}
In this subsection, we assume that $\kk=\bC$.
In this case, the following decomposition theorem holds.
\bt \label{decomposition} \cite{BBD,CM} Let $f:X\to Y$ be a proper map of complex algebraic varieties. There exists an isomorphism in $D^b_c(Y, \bC)$:
\be \label{simple}  
Rf_* \IC_X \cong  \bigoplus_j \text{  } ^p \cH^j(Rf_* \IC_X)[-j],
\ee
where $\IC_X$ is the intersection cohomology complex on $X$ with $\bC$-coefficients.
The decomposition is finite, and $j$ ranges in the interval $[-r(f), r(f)]$, where $$r(f)=\dim (X \times_Y X) -\dim X$$ is the defect of the semi-smallness of the proper map $f$.
\et 
\br
It is known that the decomposition theorem may fail for finite field coefficients, see \cite{JMW}. 
\er

Let $X$ be a smooth connected complex $n$-dimensional algebraic variety, and let $f:X \to T$ be a proper algebraic map from $X$ to the complex affine torus $T= (\bC^*)^N$. Assume that $n>r(f)$. 
Then it was shown in \cite{LMW} that the induced homomorphism $f_*:\pi_1(X) \to \pi_1(T)$ is non-trivial, and let us denote its image by $\bZ^m$ (this image is a free abelian group). 

\bd The {\it cohomology jump loci of $X$ with respect to the proper map $f: X\to T$} are defined as:
$$ \sV^{i}(X,f): =  \{\chi \in \spec\Gamma_T \mid   H^i(X, f^* L_\chi) \neq 0\} ,$$ 
 where $L_\chi$ is as before the rank-one $\bC$-local system on $T$ associated to $\chi$.\ed
Note that $\codim \sV^0(X,f)= m$, since $X$ is connected. In fact, it is clear that $H^0(X, f^{-1}L_\chi)\neq 0$ if and only $f^{-1}L_\chi$ is the constant sheaf. 

\bc  \label{complex}
Let $f : X \to T$ be a proper map of complex algebraic varieties, with $X$ smooth of complex dimension $n$.  Then the cohomology jump loci $\sV^*(X,f)$ have the following properties:
\begin{itemize}
\item[(1)] {\it Propagation property}:
$$ 
  \sV^{n-r(f)}(X,f) \supseteq \sV^{n-r(f)-1}(X,f) \supseteq \cdots \supseteq \sV^{0}(X, f) \supseteq \{ 1 \};
$$
\item[(2)] {\it Codimension lower bound}: for any $ 0 \leq i \leq n$, $$ \codim \sV^{n-i}(X,f) \geq i-r(f);$$

\item[(3)] {\it Generic vanishing}: there exists a non-empty Zariski open subset $U \subset \spec \Gamma_T
$ such that, for any maximal ideal $\chi\in U$,  $H^{i}(X, f^{*} L_\chi)=0$ for all $ i < n-r(f)$. 
\end{itemize} 
\ec
 
\begin{proof} Since $f$ is proper, we have $Rf_!=Rf_*$.
By the projection formula \cite[Theorem 2.3.29]{D2} (or simply because $ L_\chi$ is a local system), we get that
\begin{equation*}
\begin{split}
  Rf_*(\bC_X[n] \otimes_\bC f^*L_\chi) &\cong Rf_* (\bC_X[n]) \otimes_\bC L_\chi  \\ 
  &\cong   \bigoplus_{j\in [-r(f),r(f)]} (\text{  } ^p \cH^j(Rf_* \bC_X[n])[-j]) \otimes_\bC L_\chi,
  \end{split}
 \end{equation*}
 where the second isomorphism uses the decomposition theorem.
Therefore, $$\sV^i(X,f)=   \bigcup_{j\in [-r(f), r(f)]} \sV^{i-n -j} (T, \text{} ^p \cH^j(Rf_* \bC_X[n])) .$$
Then the claim follows from Theorem \ref{main}. 
\end{proof}
 
\bc \label{betti} Let $f : X \to (\cc^*)^N$ be a proper map of complex algebraic varieties, with $X$ smooth of complex dimension $n>r(f)$. Then we have that
 \begin{center}
$b_i(X)>0$ for any $i\in [0, n-r(f)]$,
\end{center} 
and \begin{center}
$b_1(X)\geq n-r(f)$.
\end{center}  If, moreover, $r(f)=0$, then 
\begin{center}
$b_i(X)>0$ exactly for $i\in [0,n]$ and $(-1)^n \chi(X)\geq 0$.
\end{center} 
\ec

\begin{proof}
 Let $\bZ^m$ be 
the image of the non-trivial homomorphism $f_*:\pi_1(X) \to \pi_1(T)$.  Then $\codim \sV^0(X,f)= m$. The codimension lower bound implies that 
$$   \codim \sV^0(X,f) \geq n-r(f), $$
 hence $b_1(X)\geq m \geq n-r(f) .$

Next note that $\{1\} \in \sV^0(X,f) $. Therefore, the propagation property yields that $b_i(X)>0$ for any $i\in [0, n-r(f)]$.

When $r(f)=0$, $Rf_* \bC_X[n]$ is a perverse sheaf on $T$, hence $$(-1)^n\chi(X)= \chi(X, \bC_X[n]) = \chi(T,Rf_* \bC_X[n]) \geq 0,$$
where the last inequality follows from Theorem \ref{main}$(vi)$.
\end{proof} 
\br Let $f : X \to (\cc^*)^N$ be a proper map of complex algebraic varieties, with $X$ smooth of complex dimension $n$. Then we have the following lower bounds for $r(f)$: \begin{center}
$ r(f) \geq n-b_1(X)$ and $ r(f) \geq n- \max\{q\geq 0 \mid b_i(X)>0  \text{ for any } 0\leq i \leq q\}.$ 
\end{center} 
\er


\section{Abelian duality spaces}\label{abd}

\subsection{Partially abelian duality space}
Let $X$ be a connected finite CW complex, and denote $\pi_1(X)$ by $G$. Let $\phi: G\to G'$ be a non-trivial homomorphism to an abelian group $G'$. There is a canonical $\bZ [G^\prime]$-local coefficient system $\sL_{\phi}$ on $X$, whose monodromy action is given by the composition of $G\overset{\phi}{ \to} G^\prime$ with the natural multiplication  $G^\prime\times \bZ[ G^\prime] \to \bZ [G^\prime]$.  
\bd\label{dpad} We call $X$ a {\it partially abelian duality space of dimension $n$ with respect to} $\phi$, if the following two conditions are satisfied:
\begin{itemize}
\item[(a)] $H^i(X, \bZ [G'])=0$ for $i\neq n$, 
\item[(b)] $H^n(X, \bZ [G'])$ is a non-zero torsion-free $\bZ$-module. 
\end{itemize}
\ed

\br It will be shown in Remark \ref{redu} that the non-vanishing condition in Definition \ref{dpad}(b) is {\it redundant}.
\er

\br Note that there is a canonical $\bZ[G']$-module isomorphism
$$H^i (X, \bZ[G']) \cong H^i(X,\sL_{\phi}),$$
for any $i$.
\er

\br \label{abelianization} If, in Definition \ref{dpad}, we let $G^\prime=G^{ab}=H_1(X, \bZ)$ and $\phi$ is the abelianization map, then $X$ is called an {\it abelian duality space}, see \cite{DSY}.
\er 

\bd  Let $\kk$ be either $\bZ$ or a field. Set $\Gamma_{G'}= \kk[G^\prime].$  The {\it cohomology jump loci of $X$ with $\kk$-coefficients with respect to $\phi$} are defined as:
 $$\sV^i(X, \phi): = \{\chi \in \spec \Gamma_{G'} \mid H^i(X, \phi^* L_\chi) \neq 0\},$$
  where to any point $\chi \in \spec \Gamma_{G'}$ 
one associates the rank one local system  $\phi^* L_\chi:=L_{\phi^* \chi}$ on $X$. 
 \ed 

\bt \label{pad} Let $X$ be a partially abelian duality space of dimension $n$ with respect to $\phi$. Let $\kk$ be either $\bZ$ or a field. Then the cohomology jump loci of $X$ with $\kk$-coefficients with respect to $\phi$ satisfy the following propagation package:
\begin{enumerate}
\item[(i)] {\it Propagation property}:
$$ 
\spec \Gamma_{G^\prime} \supseteq \sV^{n}(X,\phi) \supseteq \sV^{n-1}(X,\phi) \supseteq \cdots \supseteq \sV^{0}(X,\phi).
$$
\item[(ii)] {\it Codimension lower bound}: for any $i\geq 0$, $$ \codim \sV^{n-i}(X,\phi)= b_1(G')-\dim  \sV^{n-i}(X,\phi) \geq i.$$
\item[(iii)]  If  $V$ is an irreducible component of $\sV^n(X,\phi)$ with codimension  $d$, then $V \subset \sV^{n-d}(X,\phi).$ In particular, $V$ is also an irreducible component of $\sV^{n-d}(X,\phi)$.
\item[(iv)]  If $\sV^n(X,\phi)$ has codimension $d\geq 0$, then $$\sV^n(X,\phi)= \sV^{n-1}(X,\phi)= \cdots = \sV^{n-d}(X,\phi) \neq \sV^{n-d-1}(X,\phi).$$  If moreover $\codim \sV^{n-k}(X,\phi)>k$ for all $k>d$, then $\sV^n(X,\phi)$ is pure-dimensional.

\item[(v)] {\it Generic vanishing}: there exists a non-empty Zariski open subset $U \subset \spec \Gamma_{G^\prime}$ such that,  for any maximal ideal $\chi\in U$,  $H^{i}(X, \phi^* L_\chi)=0$ for all $i\neq n$.
\item[(vi)] {\it Signed Euler characteristic property}:  $$ (-1)^n \chi(X)\geq 0.$$
Moreover, the equality holds if and only if $\sV^n(X,\phi) \neq \spec \Gamma_{G^\prime}$.
\end{enumerate} 
 \et

\begin{proof}

Since  $X$ is a partially abelian duality space of dimension $n$ with respect to $\phi$, we have that  $H^i (X, \bZ[G'])=0$ for $i\neq n$.  
When $\kk$ is a field, using the fact that $H^n (X, \bZ[G'])$ is torsion free and the universal coefficient theorem, 
we get
\be  \label{ucte}
 H^i (X, \kk[G'])=0 \text{ for } i\neq n.
\ee

Let $a: X\to pt$ be the constant map. Let $\sL^{\kk}_\phi:=\sL_{\phi} \otimes_{\bZ} \kk$ be the canonical $\kk[G']$-local system on $X$ induced by $\phi$. Then there is an isomorphism of $\kk[G']$-modules
\be\label{re} H^i (X, \kk[G']) \cong H^i(X,\sL^{\kk}_\phi),\ee for all $i$. So, by (\ref{ucte}), $Ra_*\sL^{\kk}_\phi$ is quasi-isomorphic to  $H^n(X, \kk[G'])[-n]$, i.e., $H^n(X, \kk[G'])$ viewed as a complex concentrated in degree $n$.

For any maximal ideal $\chi \in \spec \kk[G']$ with residue field $\kk_{\chi}$ we have  $\sL_\phi^\kk \otimes_{\kk[G']}  a^*\kk_\chi = \phi^* L_\chi$, hence 
 \be\label{side} (Ra_*  \sL_\phi^\kk) \otimes_{\kk[G']} \kk_\chi 
 \cong Ra_*(\sL_\phi^\kk \otimes_{\kk[G']} a^* \kk_\chi)\cong Ra_* (\phi^* L\chi).\ee
 
Now, taking cohomology on both sides of the above isomorphism, and using Remark \ref{alt}, we have the following.
\begin{lemma}\label{equal}
Under the above notations, for all $i$, 
$$\sV^i(X, \phi)=\sV^i(H^n(X, \kk[G'])[-n])$$
as subsets of $\spec \Gamma_{G'}$.
\end{lemma}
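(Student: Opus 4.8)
The plan is to read off the lemma from the facts already assembled: the quasi-isomorphism $Ra_*\sL_\phi^{\kk}\simeq H^n(X,\kk[G'])[-n]$, the projection-formula base change (\ref{side}), and the module-theoretic description of jump loci in Remark \ref{alt}.

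First I would set $M:=H^n(X,\kk[G'])$ and regard it as a finitely generated $\Gamma_{G'}$-module; finite generation holds because $X$ is a finite CW complex, so $Ra_*\sL_\phi^{\kk}$ is computed by the (bounded, finitely generated, free) cellular cochain complex of $X$ with coefficients in $\sL_\phi^{\kk}$. By the partially-abelian-duality hypothesis one has $H^i(X,\kk[G'])=0$ for $i\ne n$ — directly from condition (a) when $\kk=\bZ$, and from condition (b) together with the universal coefficient theorem when $\kk$ is a field (this is (\ref{ucte})) — so $Ra_*\sL_\phi^{\kk}$ is quasi-isomorphic in $D^b_{coh}(\Gamma_{G'})$ to $M[-n]$.

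Next, pick a finitely generated free resolution $G^\ubul$ of $M$ and put $F^\ubul:=G^\ubul[-n]$, a bounded-above complex of finitely generated free $\Gamma_{G'}$-modules quasi-isomorphic to $M[-n]\simeq Ra_*\sL_\phi^{\kk}$. By Remark \ref{alt} applied to this complex, $\sV^i\big(H^n(X,\kk[G'])[-n]\big)=\{\chi\in\spec\Gamma_{G'}\mid H^i(F^\ubul\otimes_{\Gamma_{G'}}\Gamma_{G'}/\chi)\ne 0\}$. For a maximal ideal $\chi$ with residue field $\kk_\chi=\Gamma_{G'}/\chi$, the isomorphism (\ref{side}) — an instance of the projection formula for $Ra_*$, legitimate precisely because $\sL_\phi^{\kk}$ is a locally free rank-one local system (the point of the footnote) — gives $Ra_*(\phi^*L_\chi)\cong (Ra_*\sL_\phi^{\kk})\stackrel{L}{\otimes}_{\Gamma_{G'}}\kk_\chi\cong F^\ubul\otimes_{\Gamma_{G'}}\Gamma_{G'}/\chi$, the last isomorphism because $F^\ubul$ consists of free modules. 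Taking $H^i$ of the outer terms yields $H^i(X,\phi^*L_\chi)\cong H^i(F^\ubul\otimes_{\Gamma_{G'}}\Gamma_{G'}/\chi)$, whence $\chi\in\sV^i(X,\phi)$ if and only if $\chi\in\sV^i\big(H^n(X,\kk[G'])[-n]\big)$, as subsets of $\spec\Gamma_{G'}$.

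Since all the inputs are in place, no serious obstacle arises; the only points meriting attention are keeping the two cases $\kk=\bZ$ and $\kk$ a field straight when invoking the vanishing $H^i(X,\kk[G'])=0$ for $i\ne n$, and making sure the base-change/projection formula is applied with the local system $\sL_\phi^{\kk}$ (so that $Ra_*$ genuinely commutes with $\stackrel{L}{\otimes}_{\Gamma_{G'}}\kk_\chi$) rather than with a constant sheaf, since it is this that records the higher cohomology on both sides.
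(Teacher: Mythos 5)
Your proof is correct and follows essentially the same route as the paper: establish the vanishing (\ref{ucte}) via the universal coefficient theorem (using both condition (a) and the torsion-freeness in (b) when $\kk$ is a field), identify $Ra_*\sL_\phi^\kk$ with $H^n(X,\kk[G'])[-n]$, apply the projection-formula base change (\ref{side}), take cohomology, and invoke Remark~\ref{alt}. You simply make explicit what the paper leaves implicit (the choice of free resolution $F^\ubul = G^\ubul[-n]$ and the finite generation coming from the finite CW structure), which is a faithful unwinding rather than a different argument.
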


Then Theorem \ref{pad} follows from the above lemma and Proposition \ref{prop}.
\end{proof}

By the definition of cohomology jump loci of modules in Section \ref{alg}, $\sV^i(M)=\emptyset$ when $i>0$. Therefore, we get the following consequence of Lemma \ref{equal}.
\begin{cor}\label{dim}
If $X$ is a partial abelian duality space of dimension $n$ with respect to $\phi$, then $\sV^i(X, \phi)=\emptyset$ for all $i>n$. 
\end{cor}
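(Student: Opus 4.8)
The claim to be proved is Corollary \ref{dim}: if $X$ is a partial abelian duality space of dimension $n$ with respect to $\phi$, then $\sV^i(X,\phi) = \emptyset$ for all $i > n$.

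The plan is to reduce everything to the case of a module, using the machinery assembled in the proof of Theorem \ref{pad}, and then invoke the elementary fact about cohomology jump loci of modules recorded in Section \ref{alg}.

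First I would recall from the proof of Theorem \ref{pad} that, when $\kk$ is a field, the universal coefficient theorem together with the torsion-freeness of $H^n(X,\bZ[G'])$ gives $H^i(X,\kk[G']) = 0$ for $i \neq n$, so that $Ra_*\sL^\kk_\phi$ is quasi-isomorphic to the single module $H^n(X,\kk[G'])[-n]$, concentrated in cohomological degree $n$. Lemma \ref{equal} then identifies $\sV^i(X,\phi)$ with $\sV^i(H^n(X,\kk[G'])[-n])$ as subsets of $\spec\Gamma_{G'}$, for every $i$. (For $\kk = \bZ$ one argues directly, since $H^i(X,\bZ[G']) = 0$ for $i \neq n$ is part of the hypothesis, and $Ra_*\sL_\phi$ is already quasi-isomorphic to $H^n(X,\bZ[G'])[-n]$.)

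Now set $M := H^n(X,\kk[G'])$, viewed as a complex concentrated in degree zero, so that $M[-n]$ is concentrated in degree $n$. By the definition of cohomology jump loci in Section \ref{alg} (or Remark \ref{alt}), $\sV^j(M) = \{\chi \mid H^j(F^\ubul \otimes_R R/\chi) \neq 0\}$ where $F^\ubul$ is a finitely generated free resolution of $M$; since $M$ is concentrated in degree zero, $F^\ubul$ may be taken concentrated in non-positive degrees, and therefore $F^\ubul \otimes_R R/\chi$ has no cohomology in positive degrees, giving $\sV^j(M) = \emptyset$ for $j > 0$. Shifting by $n$, $\sV^i(M[-n]) = \sV^{i-n}(M) = \emptyset$ for $i - n > 0$, i.e.\ for $i > n$. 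Combining with Lemma \ref{equal} yields $\sV^i(X,\phi) = \emptyset$ for all $i > n$, which is the assertion.

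There is essentially no obstacle here: the only point requiring a small amount of care is making sure the identification of $\sV^i(X,\phi)$ with the jump locus of a module in degree $n$ — already established inside the proof of Theorem \ref{pad} via Lemma \ref{equal} — is available uniformly in $i$, including for $i > n$, and handling the coefficient ring $\kk = \bZ$ alongside fields (the field case going through the universal coefficient theorem as in the proof of Theorem \ref{pad}, the $\bZ$ case being immediate from the definition). Once that identification is in hand, the vanishing is purely formal from the placement of a free resolution of a module in degrees $\leq 0$.
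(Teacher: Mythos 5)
Your proof is correct and follows the same route the paper takes: the paper derives Corollary \ref{dim} as an immediate consequence of Lemma \ref{equal} together with the observation that $\sV^i(M)=\emptyset$ for $i>0$ by the very definition of cohomology jump loci of a module (a free resolution lives in nonpositive degrees). You simply spell out the shift-by-$n$ bookkeeping and the $\kk=\bZ$ versus field-$\kk$ cases in more detail than the paper's one-sentence justification.
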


\br\label{redu} Following the lines of the above proof, one can in fact show that the condition $H^n(X, \bZ [G'])\neq 0$ in Definition \ref{dpad} of a (partially) abelian duality space is redundant (i.e., it is always satisfied). Indeed, assuming that condition (a) of Definition \ref{dpad} holds and fixing a field $\kk$, say $\kk=\bC$, we have (\ref{ucte}). If, by absurd, we assume $H^n(X, \bZ [G'])= 0$, then we get by (\ref{re}) that the complex $Ra_*\sL^{\kk}_\phi$ is quasi-isomorphic to  the zero complex, with $a:X \to pt$ the constant map. Hence by (\ref{side}), the complex $Ra_* (\phi^* L\chi)$ is quasi-isomorphic to the zero complex, for any maximal ideal $\chi \in \spec \kk[G']$. This leads to a contradiction since, for $\chi$ the trivial character (corresponding to the constant sheaf), we have $H^0(Ra_* (\phi^* L\chi))=H^0(X,\bC)\neq 0$.

\er

\subsection{Universal coefficient theorem}\label{uct}
The following result is crucial in the proof of Theorem \ref{abel} and of its generalization from Section \ref{gad}. 
  \bp \label{inverse}
Let $N^\bullet$ be a bounded complex of finitely generated free $\bZ[G']$-modules, where $G'$ is a free abelian group. 
Assume that for $\kk=\bQ$, as well as for $\kk=\Fp$ for every prime number $p$, we have  
 $H^i(N^\bullet\otimes_\bZ \kk)=0$ for all $i\neq n$. Then $H^i(N^\bullet)=0$ for $i\neq n$ and $H^n(N^\bullet)$ is a torsion-free $\bZ$-module. 
\ep

\begin{proof}
First, we recall the fact that every $\bZ$-module $M$ fits into a short exact sequence
$$0\to M_{\it torsion}\to M\to M/M_{\it torsion}\to 0,$$
which splits when $M$ is finitely generated. Here, $M_{\it torsion}$ denotes the torsion $\bZ$-submodule of $M$.

Let $m$ be the largest integer such that $H^m(N^\bullet)\neq 0$. Suppose that $m>n$. Since $\bQ$ is a flat $\bZ$-module, any $\bZ$-module monomorphism $\bZ\to M$ induces a monomorphism   $\bQ\to M\otimes_\bZ \bQ$. Therefore, the assumption that $H^i(N^\bullet\otimes_\bZ \bQ)=0$ for all $i\neq n$ implies that $H^m(N^\bullet)$ is a torsion $\bZ$-module. Thus, there exists a prime number $p$ and a nonzero element $x_1\in H^m(N^\bullet)$ such that $px_1=0$. Since tensor  is a right exact functor, we also have that 
$$H^m(N^\bullet)\otimes_\bZ \Fp \cong H^m(N^\bullet\otimes_\bZ \Fp)=0.$$
Hence, there exists a non-zero element $x_2\in H^m(N^\bullet)$ such that $px_2=x_1$. Similarly,  there exists a non-zero element $x_3\in H^m(N^\bullet)$ such that $px_3=x_2$, and so on. The existence of such a sequence $\{x_1, x_2, \ldots \}$ contradicts the noetherian property of $H^m(N^\bullet)$. In fact, if we let $J_i=\{x\in H^m(N^\bullet) \ | \ p^ix=0\}$, then
$$0=J_0\subset J_1\subset J_2\subset \cdots$$
is a sequence of  $\bZ[G']$-submodules of $H^m(N^\bullet)$. Since $N^\bullet$ is a complex of finitely generated free  $\bZ[G']$-modules, $H^m(N^\bullet)$ is a noetherian $\bZ[G']$-module. This means that the inclusions $J_i\subset J_{i+1}$ will eventually becomes equalities. However, the elements $x_i$ constructed above satisfy $x_i\in J_i\setminus J_{i-1}$, which yields a contradiction. Therefore $m \leq n$, and hence $H^i(N^\bullet)=0$ for $i> n$.

Since $H^i(N^\bullet\otimes_\bZ \bQ)=0$ for all $i < n$, it follows as above that 
$H^i(N^\bullet)$ is a torsion $\bZ$-module when $i<n$. Therefore, in order to complete the proof, it suffices to show that $H^i(N^\bullet)$ is a torsion-free $\bZ$-module for any $i \leq n$. For any prime number $p$, $\bZ\xrightarrow[]{p}\bZ\to F_p$ is a free resolution of $\Fp$. Thus, 
$$N^\bullet\xrightarrow[]{p} N^\bullet\to N^\bullet\otimes_\bZ \Fp\to N^\bullet[1]$$
forms a distinguished triangle in the derived category of $\bZ$-modules. So we have a long exact sequence
$$\cdots\to H^{i-1}(N^\bullet\otimes_\bZ \Fp)\to H^{i}(N^\bullet)\xrightarrow[]{p} H^i(N^\bullet)\to H^i(N^\bullet\otimes_\bZ \Fp)\to\cdots$$
Since $H^i(N^\bullet\otimes_\bZ \Fp)=0$ for $i<n$, it follows that $H^{i}(N^\bullet)\xrightarrow[]{p} H^i(N^\bullet)$ is injective for any $i\leq n$. Hence, if $i\leq n$, the $\bZ$-module $H^i(N^\bullet)$ does not contain any element of order $p$, for any prime number $p$. Therefore,  $H^i(N^\bullet)$ is a torsion-free $\bZ$-module for any $i \leq n$. 
\end{proof}



\subsection{
Detecting abelian duality spaces}\label{gad}
In this section we prove the following theorem, which implies Theorem \ref{abel} by Remark \ref{abelianization}.
\begin{theorem}\label{veryaffine}
Let $X$ be a smooth complex $n$-dimensional quasi-projective variety, and let $f: X\to T=(\bC^*)^N$ be a proper semi-small morphism (e.g., a finite morphism or a closed embedding). Then:
\begin{enumerate}
\item $X$ is a partially abelian duality space of dimension $n$ with respect to $f_*: \pi_1(X)\to \pi_1(T)=\bZ^N$. 
\item Moreover, if the mixed Hodge structure on $H^1(X, \bQ)$ is pure of type $(1,1)$, then $X$ is an abelian duality space. 
\end{enumerate}
\end{theorem}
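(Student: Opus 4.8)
\textbf{Proof proposal for Theorem \ref{veryaffine}.}

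The plan is to deduce everything from the $t$-exactness of the Mellin transformation (Theorem \ref{GL}), the decomposition theorem (Theorem \ref{decomposition}) for $f$, and the integral universal-coefficient bootstrapping of Proposition \ref{inverse}. First I would set $\bZ^m := \mathrm{im}(f_* : \pi_1(X) \to \pi_1(T) = \bZ^N)$; since $X$ is connected and $f$ is semi-small with $n > r(f) = 0$, the map $f_*$ is non-trivial by \cite{LMW}, so $m \geq 1$. Factoring $f$ through the subtorus $T' = (\bC^*)^m \hookrightarrow T$ corresponding to $\bZ^m$, one reduces to the case where $f_*$ is surjective, i.e. $\pi_1(X) \twoheadrightarrow \bZ^m$; the local system $\sL_\phi^\kk$ for $\phi = f_*$ is then $f^* \sL_{T'}^\kk$ (pullback of the tautological local system on $T'$), and $Ra_* \sL_\phi^\kk \cong R\Gamma(T', Rf_*(f^*\sL_{T'}^\kk)) \cong R\Gamma(T', \sL_{T'}^\kk \otimes_\kk Rf_* \kk_X)$ by the projection formula.

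Next I would prove the vanishing $H^i(X, \kk[\bZ^m]) = 0$ for $i \neq n$ when $\kk$ is a field. Because $f$ is semi-small, $Rf_* \kk_X[n]$ is a perverse sheaf on $T'$ (here the purity is immediate: semi-smallness of a map from a smooth variety forces the pushforward of the shifted constant sheaf to be perverse). Hence $\sM_*(Rf_*\kk_X[n]) = Ra'_*(\sL_{T'} \otimes_\kk Rf_*\kk_X[n])$ is concentrated in degree $0$ by Theorem \ref{GL}, which is exactly the statement that $H^i(X, \kk[\bZ^m]) = H^i(T', \sL_{T'}^\kk \otimes_\kk Rf_*\kk_X)$ vanishes for $i \neq n$ (after the degree shift). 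Since this holds for $\kk = \bQ$ and for $\kk = \Fp$ for every prime $p$, Proposition \ref{inverse} applied to a bounded complex of finitely generated free $\bZ[\bZ^m]$-modules computing $R\Gamma(X, \bZ[\bZ^m])$ gives $H^i(X, \bZ[\bZ^m]) = 0$ for $i \neq n$ and $H^n(X, \bZ[\bZ^m])$ torsion-free; non-vanishing of $H^n$ is then automatic by Remark \ref{redu}. This proves part (1): $X$ is a partially abelian duality space of dimension $n$ with respect to $f_*$.

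For part (2), I would promote the partially abelian statement (with respect to $f_* : \pi_1(X) \to \bZ^m$) to the full abelian statement (with respect to the abelianization $\pi_1(X) \to G^{ab} = H_1(X,\bZ)$). The Hodge-theoretic hypothesis that $H^1(X,\bQ)$ is pure of type $(1,1)$ — equivalently, some smooth compactification $\overline{X}$ has $b_1(\overline X) = 0$ — should force $H_1(X,\bZ)$ to be, up to torsion, generated by the image of $f_*$; more precisely the natural map $H_1(X,\bZ) \to \bZ^m$ ought to identify the free part of $G^{ab}$ with $\bZ^m$ (any extra rank would come from a sub-Hodge structure not of type $(1,1)$, contradicting purity, since $X$ maps to a torus where all of $H^1$ is of type $(1,1)$ and the compactification kills the rest). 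Granting this, $\bZ[G^{ab}]$ and $\bZ[\bZ^m]$ differ only by tensoring with the group ring of a finite group (the torsion of $G^{ab}$), over which one still has the needed flatness/descent to transfer conditions (a) and (b) of Definition \ref{DSY}; alternatively one applies the same Mellin + Proposition \ref{inverse} argument directly with the local system $\sL_{\mathrm{ab}}$, noting that $R\Gamma(X, \kk[G^{ab}])$ is a direct sum over characters of the finite torsion group of copies of $R\Gamma$ with the field-coefficient local systems already controlled above. I expect the main obstacle to be precisely this last identification: showing rigorously that the $(1,1)$-purity of $H^1(X,\bQ)$ implies that the universal abelian cover and the cover defined by $f_*$ have the same (rational, and after a torsion argument, integral) cohomology in all degrees — this requires a careful mixed-Hodge-theoretic comparison of the two local systems, presumably via the Leray spectral sequence of a compactification and the weight filtration, rather than any formal manipulation.
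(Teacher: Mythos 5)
Your proof of part~(1) has the right core mechanism and matches the paper's: semi-smallness and properness force $Rf_*\kk_X[n]$ to be perverse, so the Mellin transformation (Theorem \ref{GL}) kills all cohomology outside one degree; feeding $\kk=\bQ$ and $\kk=\Fp$ into Proposition \ref{inverse} then yields the integral statement. However, the reduction you perform at the start is wrong: if $\bZ^m=\mathrm{im}(f_*)$, you cannot ``factor $f$ through the subtorus $T'=(\bC^*)^m\hookrightarrow T$.'' The image of $f_*$ on $\pi_1$ carries no information about where the set $f(X)$ sits in $T$ (a simply connected subvariety can sit anywhere), so the map $f$ itself need not land in any subtorus, and the subsequent identity $H^i(X,\kk[\bZ^m])\cong H^i(T',\sL_{T'}^\kk\otimes Rf_*\kk_X)$ does not make sense. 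The good news is that this detour is unnecessary: the theorem is stated with $G'=\pi_1(T)=\bZ^N$, and the paper simply applies the Mellin transformation on $T$ itself to $Rf_*\kk_X[n]$. Dropping your $T'$ entirely and working on $T$ repairs part~(1).

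Part~(2) is where your proposal diverges substantially and has a real gap, which you partly flag but underestimate. You attempt to ``promote'' the conclusion of part~(1) by arguing that $(1,1)$-purity of $H^1(X,\bQ)$ forces $G^{ab}/\mathrm{tors}\cong\bZ^m$. That is not what the hypothesis says: it gives $\Alb(X)\cong(\bC^*)^l$ with $l=b_1(X)$, but the rank $m$ of $\mathrm{im}(f_*)$ can a priori be strictly smaller than $l$ (the induced map $\overline f:\Alb(X)\to T$ need not be injective on $\pi_1$), so identifying $\kk[G^{ab}]$-cohomology with $\kk[\bZ^m]$-cohomology up to a finite-group twist is not justified. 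Your fallback, decomposing $R\Gamma(X,\kk[G^{ab}])$ as a direct sum over characters of the torsion subgroup, also breaks for $\kk=\Fp$ when $p$ divides $|\mathrm{Tors}\,H_1(X,\bZ)|$, since $\Fp[H_t]$ is then not semisimple. What the paper actually does avoids both problems: it factors $f$ through the Albanese torus $(\bC^*)^l$, checks (by properness and a dimension count) that $\alb_X$ is still proper and semi-small, chooses a splitting $H_1(X,\bZ)\cong H_f\oplus H_t$ to write $\sL_X\cong\sL_f\otimes_{\bZ}\sL_t$, and then applies the Mellin transformation on $(\bC^*)^l$ to $R(\alb_X)_*(\sL_t)$; here $\sL_t\otimes\kk[n]$ is a shifted local system on the smooth $X$ and hence perverse, so $R(\alb_X)_*(\sL_t\otimes\kk[n])$ is perverse and the argument of part~(1) applies verbatim with $\bZ_X$ replaced by $\sL_t$. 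This twist by the torsion local system and the second application of the Mellin functor on the Albanese torus are the essential ideas missing from your write-up, and they are exactly what circumvents the ``careful mixed-Hodge-theoretic comparison'' you anticipated having to do.
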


Before prooving Theorem \ref{veryaffine}, let us introduce some notations. 
 Let $\Gamma_{\bZ}=\bZ[\pi_1(T)]\cong \bZ[t_1^{\pm 1}, \ldots, t_N^{\pm 1}]$. We denote the canonical $\Gamma_{\bZ}$-local system on $T$ by $\sL_\bZ$. The {\it integral Mellin transformation functor} $\sM_*: D_c^b(T, \bZ)\to D^b_{coh}(\Gamma_\bZ)$ is defined as in Section \ref{MT} by
$$\sM_*(\sP):=Ra_*(\sL_{\bZ} \otimes_\bZ \sP),$$
where $a: T\to pt$ is the constant map to a point. (In these notations, we emphasize the coefficients $\bZ$, as the torus $T$ is fixed.)

\begin{proof}[Proof of Theorem \ref{veryaffine}] \item[(1)] Let $\kk$ be the field $\bQ$ or $\Fp$. Since $f$ is proper and semismall, $Rf_*(\kk_X[n])$ is a $\kk$-perverse sheaf on the complex affine torus $T$, e.g., see  \cite[Example 6.0.9]{Schu}. It then follows from Theorem \ref{GL} that
$$H^i(\sM_*(Rf_*(\kk_X[n])))=0 \text{ for } i\neq 0$$
or, equivalently,
$$H^i(\sM_*(Rf_*(\kk_X)))=0 \text{ for } i\neq n.$$
By using the projection formula, one has that
$$\sM_*(Rf_*(\bZ_X))\stackrel{L}{\otimes}_\bZ \kk \cong \sM_*(Rf_*(\bZ_X \otimes_\bZ \kk))\cong \sM_*(Rf_*(\kk_X)),$$
which has nonzero cohomology only in degree $n$. Since $X$ is homotopy equivalent to a finite CW complex, $\sM_*(Rf_*(\bZ_X))$ can be realized as a bounded complex of finitely generated free $\Gamma_\bZ$-modules. Notice that, by definition, $$H^i(\sM_*(Rf_*(\bZ_X)))\cong H^i(X,f^*\sL_{\bZ}) \cong H^i(X, \bZ[G']),$$ where $G'=\pi_1(T)$ and the homomorphism $\pi_1(X)\to G'$ is induced by the map $f:X \to T$. Therefore, Theorem \ref{veryaffine} (1) follows from Proposition \ref{inverse}.

Finally, it follows by Remark \ref{redu} that $H^n(X, \bZ[G']) \neq 0$. Let us, however, also indicate a more geometric proof of this fact. Since $H^n(X, \bZ[G'])$ is a torsion-free $\bZ$-module, it suffices to prove that $H^n(X, \bQ[G']) \neq 0$. As above, we have that
$$H^n(X, \bQ[G']) \cong H^0(\cM_*(Rf_*(\bQ_X[n]))) \cong \cM_*(Rf_*(\bQ_X[n])),$$
where the last equality uses the $t$-exactness of the Mellin transformation and the fact that $Rf_*(\bQ_X[n])$ is a $\bQ$-perverse sheaf on $T$. Finally, the arguments of \cite[Proposition 3.4.6]{GL} can be adapted to our setting to show that $\cM_*(Rf_*(\bQ_X[n])) = 0$ if, and only if, $Rf_*(\bQ_X[n])=0$, which is clearly not the case.

\medskip

\item[(2)]  Let us now assume that the mixed Hodge structure on $H^1(X, \bQ)$ is pure of type $(1,1)$. 
Denote the first Betti number of $X$ by $b_1(X)=l$. 

Let $\alb_X: X\to \Alb(X)$ be the generalized Albanese map (e.g., see \cite[Proposition 4]{Iit}). Since $H^1(X, \bQ)$ is of type $(1,1)$ and since $b_1(X)=l$, the semi-abelian variety $\Alb(X)$ is isomorphic to $(\bC^*)^l$. Notice that the generalized Albanese map of an affine torus is an isomorphism, which we will consider as an identity. By the functoriality of the Albanese morphism, the map $f: X\to T$ induces a commutative diagram
$$
\xymatrix{
X\ar^{f}[r]\ar^{\alb_X}[d]&T\ar^{\alb_T=\id}[d]\\
\Alb(X)=(\bC^*)^l\ar^{\overline{f}}[r]& \Alb(T)=T.
}
$$
Since $(\bC^*)^l$ is a smooth variety, $\overline{f}: (\bC^*)^l\to T$ is separable. Thus, the fact that $f: X\to T$ is proper implies that $\alb_X: X\to (\bC^*)^l$ is proper. By a simple dimension count, one can see that the semi-smallness assumption on $f$ implies that $\alb_X$ is semi-small. Altogether, $\alb_X$ is proper and semi-small. 

By the definition of the generalized Albanese map, $\alb_{X,*}: H_1(X, \bZ)\to H_1((\bC^*)^l, \bZ)$ induces an isomorphism between $H_f:=H_1(X, \bZ)/{\it torsion}$ and $H_1((\bC^*)^l, \bZ)\cong \bZ^l$. 

Set $H_t:={\it Tors}(H_1(X, \bZ))$, where ${\it Tors}$ denotes the torsion subgroup. Let 
$\Gamma_X:=\bZ [H_1(X,\bZ)]$, $\Gamma_f:=\bZ [H_f]$ and $\Gamma_t:=\bZ [H_t]$. As defined in the beginning of this section, there exist a canonical $\Gamma_X$-local system $\sL_X$ and a canonical $\Gamma_f$-local system $\sL_f$ on $X$. 
Any non-canonical splitting $H_1(X,\bZ) \cong H_f \oplus H_t$ 
induces a non-canonical isomorphism 
$$\sL_X\cong \sL_{f}\otimes_\bZ \sL_t,$$
where $\sL_t$ is the $\Gamma_t$-local system on $X$ defined via the epimorphism $\pi_1(X) \ra H_1(X,\bZ) \ra H_t$ (induced by the splitting).
Denote the integral Mellin transformation functor on $(\bC^*)^l$ by $\sM_*$. Then, by using the projection formula for $\alb_{X}$, we have a (non-canonical) isomorphism
$$Ra_*(\sL_X)\cong \sM_*(R( \alb_{X})_*(\sL_t))$$
in $D^b_{coh}(\Gamma_X)$, with $a:X \to pt$ the constant map to a point. Here, we regard $\sL_t$ as a local system of finitely generated free $\bZ$-modules on $X$. 
Next note that we have a canonical isomorphism:
$$
\sM_*(R(\alb_X)_*(\sL_t))\stackrel{L}{\otimes}_\bZ \kk \cong \sM_*(R(\alb_X)_*(\sL_t\stackrel{L}{\otimes}_\bZ \kk)).
$$
We can now apply the same argument as in the proof of part (1), with $f$ and $\bZ_X$ replaced by $\alb_X$ and $\sL_t$, respectively, to conclude that $H^i(X, \sL_X)=0$ when $i\neq n$ and $H^n(X, \sL_X)$ is torsion-free. This completes the proof of the second part of Theorem \ref{veryaffine}. 
\end{proof}


\section{Examples}\label{sex}

\bex \label{affine}  Let $X$ be  an $n$-dimensional smooth closed subvariety of the complex affine torus $T$ (that is, $X$ is a {\it very affine manifold}).  The closed embedding map $f:X\to T$ is clearly a proper semi-small map, and hence $X$ is a partially Abelian duality space with respect to the induced homomorphism $f_*: \pi_1(X) \to \pi_1(T)$.  So the properties listed in Theorem \ref{pad}  hold. 
If, moreover, the mixed Hodge structure on $H^1(X, \bQ)$ is pure of type $(1,1)$, then $X$ is an abelian duality space, and hence $X$ also satisfies the properties of Theorem \ref{ad}. 
\eex

\bex \label{hyperplane} Let $X$ be  the complement of a union of $(N-n)$ irreducible hypersurfaces  in $(\bC^*)^n$, e.g., the complement of an essential hyperplane arrangement, or of a toric hyperplane arrangement.  It is easy to see that one can always find a closed embedding  $f:X \to (\bC^*)^N$  such that $X$ is an algebraic closed submanifold of $(\bC^*)^N$ and the induced map on the first $\bZ$-homology  groups is an isomorphism. In particular,  the mixed Hodge structure on $H^1(X, \bQ)$ is pure of type $(1,1)$, hence $X$ is a abelian duality space.  Then the properties listed in Theorem \ref{ad} hold.
\eex

\bex \label{hypersurface} 
Consider a hypersurface $V$ in $\CP$, where $V=V_{0}\cup \cdots \cup V_{N}$ has $N+1$ irreducible components $V_{i}=\lbrace f_{i}=0 \rbrace$, $i=0,\cdots,N$. Here $f_{i}$ is a reduced homogeneous polynomial of degree $d_{i}$.  Assume, moreover, that the hypersurface $V$ is {\it essential}, i.e., $V_{0}\cap \cdots \cap V_{N} = \emptyset.$ It is clear that if $V$ is essential, then $n\leq N$.

Set $\gcd(d_0,\cdots,d_N)=d$.
Consider the well-defined map 
 $$f=\big( f_{0}^{d / d_0}, f_{1}^{d / d_1}, \cdots, f_{N}^{d/d_N}\big): \CP  \to \mathbb{CP}^{N}.$$
 The divisor $\sum_{i=0}^N d/d_i \cdot V_i$ defines an ample line bundle on $\CP$. If follows from \cite[Corollary 1.2.15]{La} that $f$ is a finite map.  Taking the restriction of $f$ over $X=\CP\setminus V$, we obtain a finite map $f: X \ra (\bC^{\ast})^{N}= \mathbb{CP}^N \setminus \bigcup_{i=0}^N D_i$, hence $f$ is proper and semi-small. 
Since  the mixed Hodge structure on $H^1(X, \bQ)$ is pure of type $(1,1)$, we get by Theorem \ref{veryaffine} that $X$ is an abelian duality space.  Then the properties listed in Theorem \ref{ad}  hold.    The signed Euler characteristic property $(-1)^n\chi(X)\geq 0$ in this case was first proved in \cite[Corollary 5.12]{LM}.
\eex

\bex \label{ample}  Let $Y$ be a smooth complex projective variety, and let $\sL$ be a very ample line bundle on $Y$. Consider a $N$-dimensional sub-linear system $\vert E\vert $ of $\vert \sL \vert$ such that $E$ is base point free over $Y$. Then a basis $\{s_0, s_1, \cdots, s_N\}$ of $E$ gives  a well-defined morphism  $$\varphi_{\vert E \vert}:   Y \to \mathbb{CP}^N .$$ Each  $\{ s_i=0 \} $ defines a  hypersurface $V_i$ in $Y$. In particular, $\bigcap_{i=0}^N V_i = \emptyset$.  Since $\sL $ is very ample, $\varphi_{\vert E \vert}$ is a finite morphism. In fact, 
if $\varphi_{\vert E \vert}$ is not finite then there is a subvariety $Z \subset Y$ of positive dimension which is contracted by $\varphi_{\vert E \vert}$ to a point. Since $\sL= \varphi_{\vert E \vert}^* \sO_{\mathbb{CP}^N}(1)$, we see that $\sL$ restricts to a trivial line bundle on $Z$. In particular, $\sL_{\vert  Z }$ is not ample, and by \cite[Proposition 1.2.13]{La}, neither is $\sL$, which contradicts with our assumption that $\sL$ is very ample. (Here we use the proof from \cite[Corollary 1.2.15]{La}.)

 Taking the restriction $f$ of $\varphi_{\vert E \vert}$ over  $X= Y\setminus \bigcup_{i=0}^N V_i $, we get a map
\begin{align*}
 f: X & \lra T=(\bC^*)^N \\
x  &\mapsto (\dfrac{s_1}{s_0}, \dfrac{s_2}{s_0}\cdots,\dfrac{s_N}{s_0}),
\end{align*}
which is finite, hence  proper and semi-small. 
Then $X$ is a partially abelian duality space for the induced homomorphism $f_*: \pi_1(X) \to \pi_1(T)$.  So, the properties listed in Theorem \ref{pad} hold. 

If, moreover, $H^1(Y,\bQ)=0$, then the mixed Hodge structure on $H^1(X, \bQ)$ is pure of type $(1,1)$, hence $X$ is an abelian duality space. So, in this case the properties listed in Theorem \ref{ad} hold.
\eex

\bex  In all examples considered above, the map $f:X \to T$ is finite, hence proper and semi-small.  Hence $Rf_{\ast}=Rf_{!}$ is $t$-exact, i.e., $Rf_{\ast}=Rf_{!}$ preserves perverse sheaves (e.g., see \cite[Corollary 5.2.15]{D2}). Let $X$ be any of the spaces considered in the above examples. 
 Let $W$ be a connected closed subvariety of $X$ of pure dimension $k$,  which is locally a complete intersection (e.g., $W$ is a hypersurface in $X$). Then for any local system  $L$ of finite rank on  $W$, $L[k]$ is a perverse sheaf on $W$ (e.g., see \cite[Theorem 5.1.20]{D2}). By taking $L$ to be the constant sheaf $\bC_W$, the same arguments as in Corollary \ref{betti} yield the following properties: 
 \begin{enumerate}
\item[(i)] $b_i(W) > 0$ for any $0 \leq i \leq k$,
\item[(ii)] $b_1(W) \geq  k$,
\item[(iii)]  $(-1)^{k}\chi(W) \geq 0$. 
\end{enumerate}
 More generally, if $W \overset{j}{\hookrightarrow} X$ is a closed subvariety of $X$ of pure dimension $k$, one can consider the intersection cohomology complex $IC_W$ on $W$, which is a perverse sheaf, and apply Theorem \ref{12} to the perverse sheaf $Rf_*(j_*IC_W)$ on $T$.  
For example, Theorem \ref{12}$(i)$ shows that the intersection cohomology group $\mathrm{ IH} ^i(W,\bQ)\neq 0$ for any $0\leq i\leq k$. 
Moreover, Theorem \ref{12}$(vi)$ yields that $$\sum_{i=0}^k (-1)^{i+k }\dim \mathrm{ IH}^i (W,\bQ) \geq 0.$$ 
\eex

\section{Compact K\"ahler manifolds as abelian duality spaces}\label{projective}

In this section, we show that compact complex tori are the only compact K\"ahler manifolds that are abelian duality spaces. 

\begin{theorem}\label{av}
Let $X$ be a compact K\"ahler manifold. Then $X$ is an abelian duality space if and only if $X$ is a compact complex torus. In particular, abelian varieties are the only complex projective manifolds that are abelian duality spaces.
\end{theorem}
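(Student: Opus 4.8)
The plan is to prove both implications. The easy direction: if $X$ is a compact complex torus of dimension $n$, then $X = \bC^n/\Lambda$ is a $K(\pi,1)$ with $\pi = \bZ^{2n}$ abelian, so the universal cover equals the universal abelian cover; it is $\bC^n$, whose compactly supported cohomology is concentrated in degree $2n = \dim_{\bR} X$ and is $\bZ$ there. Hence $X$ is an abelian duality space of dimension $2n$. (Equivalently, one checks $H^i(X, \bZ[\pi_1]) = H^i_c(\widetilde{X}, \bZ)$ directly.) For the converse, suppose $X$ is a compact Kähler manifold of complex dimension $N$ which is an abelian duality space of dimension $n$. The first step is to pin down $n$: since $X$ is a closed oriented $2N$-manifold it satisfies Poincaré duality, and combined with the abelian duality property (which forces, via Theorem \ref{ad}, the cohomology jump loci to propagate and $b_i(X) > 0$ for $0 \le i \le n$, together with $b_1(X) \ge n$) one should deduce $n = 2N$ and moreover $b_1(X) \ge 2N$.

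Next I would use the Albanese map $\alb_X : X \to \Alb(X)$, where $\Alb(X)$ is a complex torus of dimension $q = \frac12 b_1(X) \ge N$. The key point is to apply the propagation package of Theorem \ref{ad} to the jump loci $\sV^i(X)$ living in $\spec \Gamma_{G^{ab}} \cong (\bC^*)^{b_1(X)}$ (using that, for $X$ Kähler, these jump loci are finite unions of translated subtori and, by Theorem \ref{ad}(ii), $\codim \sV^n(X) \ge 0$ with $\sV^n(X)$ of codimension $d$ implying a chain of equalities). Since $X$ is an abelian duality space, generic vanishing (Theorem \ref{ad}(v)) says $H^i(X, L_\chi) = 0$ for $i \ne n = 2N$ and generic $\chi$. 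But for a compact Kähler manifold and a generic unitary character $\chi$, one has $H^i(X, L_\chi) = 0$ for $i$ outside $[\,?\,]$ — more precisely, by the work on cohomology jump loci of compact Kähler manifolds (Green–Lazarsfeld, Arapura, Simpson), the loci $\sV^i(X)$ near the origin are governed by the Albanese, and $\sV^{2N}(X) = \spec \Gamma_{G^{ab}}$ would force (via the signed Euler characteristic statement in Theorem \ref{ad}(vi) being an equality iff $\sV^n \ne \spec \Gamma$) a strong constraint. Combining: if $\sV^{2N}(X) = \spec\Gamma$ then $\chi(X) = 0$; if $\sV^{2N}(X) \ne \spec\Gamma$ then $(-1)^{2N}\chi(X) = \chi(X) > 0$. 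I would then use the propagation chain $\sV^{2N}(X) \supseteq \cdots \supseteq \sV^0(X) \ni 1$ together with $b_1(X) \ge 2N = \dim_\bR X$ and the fact that for a compact manifold $b_1 \le 2N$ only with equality in the torus-like case — concretely, $b_1(X) \ge 2 \dim_\bC X$ forces, for a compact Kähler manifold, that $\alb_X$ is a biholomorphism onto $\Alb(X)$ (this is a classical fact: equality $b_1 = 2\dim_\bC$ for Kähler $X$ implies $X$ is a complex torus, since then $h^{1,0} = \dim_\bC X$ gives $n$ independent holomorphic $1$-forms with no common zeros, forcing $\alb_X$ étale hence an isomorphism by degree reasons).

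Thus the crux is the inequality $b_1(X) \ge 2\dim_\bC X$ for an abelian duality space $X$ which is compact Kähler. I expect \textbf{this} to be the main obstacle: deriving it requires marrying the purely homological output of Theorem \ref{ad} (Betti number positivity $b_i > 0$ for $i \le n$ and $b_1 \ge n$) with the topological constraint $n = \dim_\bR X = 2\dim_\bC X$, which itself needs an argument — the natural route is that an abelian duality space of dimension $n$ has, via the duality isomorphism $H^i(X,A) \cong H_{n-i}(G^{ab}, B\otimes A)$, nonvanishing top cohomology with twisted coefficients only up to degree $n$, while Poincaré duality on the closed manifold $X$ gives nonvanishing up to $2N$; reconciling these gives $n \ge 2N$, and $n \le 2N$ is automatic since $X$ has the homotopy type of a $2N$-dimensional complex. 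Once $b_1(X) = 2N$ is in hand, the classification of compact Kähler manifolds with maximal $b_1$ as complex tori finishes the proof, and the projective case follows since a projective complex torus is an abelian variety. I would cite \cite{DSY} for the duality isomorphism and the standard references for the Kähler classification, and assemble the logical chain carefully to make sure the dimension bookkeeping (real vs.\ complex dimension, and the index $n$ in ``abelian duality space of dimension $n$'') is consistent throughout.
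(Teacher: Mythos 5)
Your determination of the abelian duality dimension ($n=2N$, where $N=\dim_{\bC}X$) is on the right track, and the easy direction is fine. But there are two genuine gaps in the converse. First, you prove the \emph{lower} bound $b_1(X)\ge n=2N$ (from Theorem \ref{Bettinumbers}), but you never establish the \emph{upper} bound $b_1(X)\le 2N$. You write ``Once $b_1(X)=2N$ is in hand'' as if this were done, but it isn't, and it is not a formal consequence of the propagation package: closed oriented $2N$-manifolds certainly can have $b_1$ arbitrarily large. The paper's route to this bound is completely different and is the real content of the theorem: since propagation together with $\sV^{2N}(X)=\{\mathbf{1}\}$ forces \emph{all} $\sV^i(X)=\{\mathbf{1}\}$ to be zero-dimensional, it invokes Wang's result \cite[Theorem 2.1]{Wa17} (extended to the K\"ahler case in Lemma \ref{62} via Saito's decomposition theorem \cite{Sai} and Proposition \ref{63}) to conclude that $\alb_X$ is \emph{surjective} and that all $R^i\alb_{X*}\bC_X$ are local systems. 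Surjectivity gives $\dim\Alb(X)\le \dim X$, i.e.\ $b_1\le 2N$; the local-system property then forces $\alb_X$ to be finite and unramified, hence a covering, hence $X$ is a torus. This entire chain is absent from your proposal, and I don't see how to replace it by ``Green--Lazarsfeld/Arapura/Simpson'' generalities alone.

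Second, the ``classical fact'' you invoke to close the argument---that a compact K\"ahler manifold with $b_1=2\dim_{\bC}X$ is a complex torus because $h^{1,0}=\dim_{\bC}X$ gives pointwise-independent holomorphic $1$-forms with no common zeros---is false. The blowup of an abelian surface at a point has $b_1=4=2\dim_{\bC}$, $h^{1,0}=2$, yet is not a torus: the two holomorphic $1$-forms pull back from the abelian surface and become linearly dependent along the exceptional divisor, so having $\dim_{\bC}X$ linearly independent holomorphic $1$-forms globally does \emph{not} imply they are pointwise independent. What actually makes the argument close in the paper is not a $b_1$ count but the fact that $R^0\alb_{X*}\bC_X$ is a local system and $R^i\alb_{X*}\bC_X=0$ for $i\ge 1$, which directly shows $\alb_X$ is a finite covering. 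So you would need to replace the appeal to the nonexistent classification by the sheaf-theoretic argument through the Albanese map.
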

\begin{proof}
Obviously, a compact complex torus is an abelian duality space. Conversely, let us first assume  that $X$ is a complex projective manifold, and an abelian duality space. Denote the complex dimension of $X$ by $n$. 
Then $X$ is a closed oriented smooth manifold of real dimension $2n$. 

We consider $\bC$-coefficient cohomology jump loci $\sV^i(X)\subseteq \spec \Gamma_{G^{ab}}$, where $\Gamma_{G^{ab}}=\bC[G^{ab}]$. Since $\sV^0(X)=\{\mathbf{1}\}$, consisting of the trivial representation, we obtain by Poincar\'e duality that $\sV^{2n}(X)=\{\mathbf{1}\}$. Since $\sV^{i}(X)=0$ for all $i>2n$, by Corollary \ref{dim}, $X$ is an abelian duality space of dimension at least $2n$. On the other hand, since $\sV^{2n}(X)=\{\mathbf{1}\}$, by Theorem \ref{pad} (i), it follows that $X$ can not be an abelian duality space of dimension less than $2n$. Hence $X$ is an abelian duality space of dimension exactly $2n$. 

Again by Theorem \ref{pad} (i) and $\sV^{2n}(X)=\{\mathbf{1}\}$, we have the equalities $\sV^0(X)=\sV^1(X)=\cdots=\sV^{2n}(X)=\{\mathbf{1}\}$. Therefore, by \cite[Theorem 2.1]{Wa17}, the Albanese map $\alb_X: X\to \Alb(X)$ is surjective and $R^i\alb_{X*}(\bC_X)$ are local systems on $\Alb(X)$ for all $i\in \bN$. Thus, $\dim X\geq \dim \Alb(X)$, or equivalently $n\geq \frac{1}{2}b_1(X)$. On the other hand, by Theorem \ref{Bettinumbers}, we have $b_1(X)\geq 2n$. Therefore, $b_1(X)=2n$ and the Albanese map $\alb_X$ is surjective and generically finite. Since $R^i\alb_{X*}(\bC_X)$ are all local systems, we then get that $R^i\alb_{X*}(\bC_X)=0$ for $i\geq 1$. Therefore, all the fibers of $\alb_X$ are zero-dimensional. In other words, $\alb_X$ is quasi-finite. A proper quasi-finite map is finite. Since $R^0\alb_{X*}(\bC_X)$ is a local system on $\Alb(X)$, the finite map $\alb_X$ has no ramification locus, and hence is a covering map. Any finite cover of an abelian variety is also an abelian variety, thus proving the assertion in the smooth projective case. 


Let us now discuss the proof in the K\"ahler case.
The only step in the above proof that requires $X$ being projective is when we quote \cite[Theorem 2.1]{Wa17}. We next show how this particular theorem can be generalized to compact K\"ahler manifolds by using the arguments in the proof of \cite[Proposition 4.1]{Wa16}. Since the support of $R^0\alb_{X*}(\bC_X)$ is equal to the image of $\alb_X$, if $R^0\alb_{X*}(\bC_X)$ is a local system on $\Alb(X)$, then $\alb_X$ is surjective. Therefore, in order to complete the proof of the theorem it suffices to show the following: 
\begin{lemma}\label{62}
Let $X$ be a compact K\"ahler manifold. Suppose $\sV^i(X)$ are zero-dimensional for all $i$. Then $R^i\alb_{X*}(\bC_X)$ are local systems on $\Alb(X)$, for all $i$. 
\end{lemma}
\noindent{\it Proof of Lemma \ref{62}.} \ 
By the decomposition theorem of proper maps of K\"ahler manifolds (\cite{Sai}), there exists a decomposition
\begin{equation}\label{decomposition}
R\alb_{X*}(\bC_X)\cong \bigoplus_{1\leq j\leq l}M_j[d_j]
\end{equation}
where each $M_j$ is a perverse sheaf which underlies an irreducible pure polarizable Hodge module on $\Alb(X)$. By definition, 
$$\bigcup_{i,j}\sV^i(\Alb(X), M_j)=\bigcup_{i}\sV^i(X).$$
Thus, $\sV^i(\Alb(X), M_j)$ is zero-dimensional for all $i$ and $j$. Let us fix $j$. We claim that the support of $M_j$ cannot be contained in any proper subtorus of $\Alb(X)$. Indeed, if the support of $M_j$ is contained in a proper subtorus $S$ of $\Alb(X)$, then every $\sV^i(\Alb(X), M_j)$ is invariant under the translations by elements in the image of $\Char(\Alb(X)/S)\to \Char(\Alb(X))$, induced by the natural projection $\Alb(X)\to \Alb(X)/S$. This contradicts the fact that all $\sV^i(\Alb(X), M_j)$ are  zero-dimensional. The same argument also shows that the support of $M_j$ can not be contained in any translate of a proper subtorus of $\Alb(X)$. 

In order to complete the proof of Lemma \ref{62}, we make use of the following statement derived from the proof of \cite[Theorem 4.1]{Wa16}:
\begin{prop}\label{63}
Let $\sT$ be a compact complex torus. Let $M$ be a perverse sheaf on $\sT$ that underlies an irreducible polarizable Hodge module of geometric origin. Assume that the support of $M$ is not contained in any translate of a proper subtorus of $\sT$. Then there exists a finite cover $\pi: \sT'\to \sT$, a holomorphic map $\psi: \sT'\to \sT'_{\textrm{alg}}$ to an abelian variety (the algebraic reduction of $\sT'$) and a constructible complex $N\in D^b_c(\sT'_{\textrm{alg}}, \bC)$, such that $\pi^*(M)\simeq \psi^*(N)$ in $D^b_c(\sT', \bC)$. 
\end{prop}


Applying Proposition \ref{63} for $\sT=\Alb(X)$ and $M=M_j$, we have a finite cover $\pi: \sT'\to\Alb(X)$, a holomorphic map $\psi: \sT'\to \sT'_{\textrm{alg}}$, and $N\in D^b_c(\sT'_{\textrm{alg}}, \bC)$, such that $\pi^*(M_j)\cong \psi^*(N)$. Since $\sV^i(\Alb(X), M_j)$ are zero-dimensional for all $i$, we have that $\sV^i(\sT', \pi^*(M_j))$ are also zero-dimensional (see the proof of \cite[Lemma 3.2]{Wa16}). By the projection formula, it follows that all $\sV^i(\sT'_{\textrm{alg}}, N)$ are zero-dimensional (see the proof of \cite[Proposition 4.1]{Wa16}). Now, by the proof of \cite[Theorem 2.1]{Wa17}, the constructible complex $N$ is isomorphic to a shift of a local system on $\sT'_{\textrm{alg}}$, and hence both $\pi^*(M_j)$ and $M_j$ are shifts of local systems. Therefore, we have shown that each direct summand $M_j$ in the decomposition (\ref{decomposition}) is a shift of local system. This completes the proof of the lemma. 
\end{proof}

One can further ask the following questions inspired by the above theorem. 
\begin{que}
Does there exists a closed orientable manifold that is an abelian duality space, but not a real torus?
\end{que}

\begin{que}\label{23}
If $X$ is a 
duality space, what can be said about its topology? 
For example, is 
the Euler characteristic of $X$ signed? 
\end{que}

If $X$ is aspherical and a closed oriented manifold, then it is a duality space. Moreover, the following holds:
\bp\label{24} Let $X$ be a closed oriented manifold of dimension $m$, then $X$ is a duality space if and only if $X$ is aspherical.
\ep
\begin{proof} If $X$ is a duality space of dimension $k$, then, by definition, the compactly supported cohomology $H_c^*(\widetilde{X},\bZ)$ of the universal cover $\widetilde{X}$ of $X$ is concentrated in degree $k$. On the other hand, since $\widetilde{X}$ is an $m$-dimensional manifold, Poincar\'e duality yields that the homology $H_*(\widetilde{X},\bZ)$ is concentrated in degree $m-k$. Since $H_0(\widetilde{X},\bZ) \neq 0$,  this further implies $k=m$.
Therefore, $\widetilde{X}$ is a simply-connected manifold with trivial reduced homology, so it follows by Hurewicz that ${X}$ is aspherical.
\end{proof}


\br Motivation for the second part of Question \ref{23} comes from the {\it Hopf and Singer conjectures}, e.g., see \cite[Ch.11]{Lu} for an overview. Hopf conjectured that if $M$ is a closed manifold of real dimension $2n$, 
with negative sectional curvature, then $(-1)^n \chi(M) \geq 0$.
Jost-Zuo \cite{JZ} proved Hopf's conjecture in the K\"ahler context. More precisely, 
they showed that if $M$ is 
a compact K\"ahler manifold of complex dimension $n$ and non-positive 
sectional curvature, then $(-1)^n \chi(M) \geq 0$. 
Hopf's conjecture was strenghtened by Singer, who claimed that if $M^{2n}$ 
is a closed aspherical manifold, then $(-1)^n \chi(M) \geq 0$. 
Note that if $M$ carries a Riemannian metric with non-positive sectional 
curvature, then $M$ is aspherical by Hadamard's Theorem. 
So Proposition \ref{24} gives a reformulation of Singer's conjecture from the point of view of homological duality.
\er


Back to the realization problem mentioned in the Introduction, one would ultimately like to classify all quasi-projective (or quasi-K\"ahler) manifolds that are (abelian) duality spaces.


\end{document}